\newtheorem{definition}{Definition}[section]
\newtheorem{theorem}{Theorem}[section]
\newtheorem{lemma}{Lemma}[section]
\newtheorem{corollary}{Corollary}[section]
\newtheorem{proposition}{Proposition}[section]
\newtheorem{remark}{Remark}[section]
\newcommand{\R}{\mathbb R}
\newcommand{\bt}{\begin{theorem}}
\newcommand{\et}{\end{theorem}}
\newcommand{\bl}{\begin{lemma}}
\newcommand{\el}{\end{lemma}}
\newcommand{\bd}{\begin{definition}}
\newcommand{\ed}{\end{definition}}
\newcommand{\bc}{\begin{corollary}}
\newcommand{\ec}{\end{corollary}}
\newcommand{\bp}{\begin{proof}}
\newcommand{\ep}{\end{proof}}
\newcommand{\bx}{\begin{example}}
\newcommand{\ex}{\end{example}}
\newcommand{\bi}{\begin{exercise}}
\newcommand{\ei}{\end{exercise}}
\newcommand{\bo}{\begin{proposition}}
\newcommand{\eo}{\end{proposition}}
\newcommand{\br}{\begin{remark}}
\newcommand{\er}{\end{remark}}
\newcommand{\be}{\begin{equation}}
\newcommand{\ee}{\end{equation}}
\newcommand{\ba}{\begin{align}}
\newcommand{\ea}{\end{align}}
\newcommand{\bn}{\begin{enumerate}}
\newcommand{\en}{\end{enumerate}}
\newcommand{\bg}{\begin{align*}}
\newcommand{\bcs}{\begin{cases}}
\newcommand{\ecs}{\end{cases}}
\newcommand{\bean}{\begin{eqnarray*}}
\newcommand{\eean}{\end{eqnarray*}}
\numberwithin{equation}{section}
\begin{document}
\begin{CJK*}{GBK}{song}

\title{\bf {On critical $p$-Laplacian systems}
\thanks{Supported by NSFC(11371212, 11271386). Tel. +86-10-62796889.   E-mails: $^a$guozy@163.com, $^b$kperera@fit.edu, $^c$wzou@math.tsinghua.edu.cn}}

\date{}
\author{{\bf Zhenyu Guo$^a$, Kanishka Perera$^b$, Wenming Zou$^c$ }\\
\footnotesize {\it $^{a,c}$Department of Mathematical Sciences, Tsinghua University,  Beijing 100084, China}\\
\footnotesize{\it $^b$Department of Mathematical Sciences, Florida Institute of Technology, Melbourne, FL 32901, USA}}

\maketitle

\vskip0.7in

\begin{center}
\begin{minipage}{120mm}

\begin{center}{\bf Abstract}\end{center}

We consider the critical $p$-Laplacian system
\begin{equation}\label{92}
\begin{cases}-\Delta_p u-\frac{\lambda a}{p}|u|^{a-2}u|v|^b =\mu_1|u|^{p^\ast-2}u+\frac{\alpha\gamma}{p^\ast}|u|^{\alpha-2}u|v|^{\beta}, &x\in\Omega,\\
-\Delta_p v-\frac{\lambda b}{p}|u|^a|v|^{b-2}v =\mu_2|v|^{p^\ast-2}v+\frac{\beta\gamma}{p^\ast}|u|^{\alpha}|v|^{\beta-2}v, &x\in\Omega,\\
u,v\ \text{in } D_0^{1,p}(\Omega),
\end{cases}
\end{equation}
where $\Delta_p:=\text{div}(|\nabla u|^{p-2}\nabla u)$ is the $p$-Laplacian operator defined on $D^{1,p}(\mathbb{R}^N):=\{u\in L^{p^\ast}(\mathbb{R}^N):|\nabla u|\in
L^p(\mathbb{R}^N)\}$, endowed with norm
$\|u\|_{D^{1,p}}:=\big(\int_{\mathbb{R}^N}|\nabla
u|^p\text{d}x\big)^{\frac{1}{p}}$, $N\ge3$, $1<p<N$, $\lambda, \mu_1, \mu_2\ge 0$, $\gamma\neq0$, $a, b, \alpha, \beta > 1$ satisfy $a + b = p, \alpha + \beta = p^\ast:=\frac{Np}{N-p}$, the critical Sobolev exponent, $\Omega$ is $\mathbb{R}^N$ or a bounded domain in $\mathbb{R}^N$, $D_0^{1,p}(\Omega)$ is the closure of $C_0^\infty(\Omega)$ in $D^{1,p}(\mathbb{R}^N)$.
Under suitable assumptions, we establish the existence and nonexistence of a positive least energy solution of (\ref{92}). We also consider the existence and multiplicity of nontrivial nonnegative solutions.
\vskip0.1in

{\it Key words}: Nehari manifold, $p$-Laplacian systems, least energy solutions, critical exponent.

\end{minipage}
\end{center}
\vskip0.27in
\newpage

\section{Introduction}

Equations and systems involving the $p$-Laplacian operator have been extensively studied in the recent years (see, e.g., \cite{ArmstrongSirakov.2011, AziziehClement.2002, BoccardoGuedes.2002, BozhkovMitidieri.2003, ByeonJeanjeanMaris.2009, CarmonaCingolaniMartinez-AparicioVannella.2013, Chen.2005, ClementFleckingerMitidieriThelin.2000, DingXiao.2010, DongKim.2011, Hsu.2009, LeadiRamos.2011, PereraPucciVarga.2014, PereraSim.2014, Yang.2003} and their references). In the present paper, we study the critical $p$-Laplacian system
\begin{equation}\label{6}
\begin{cases}-\Delta_p u-\frac{\lambda a}{p}|u|^{a-2}u|v|^b =\mu_1|u|^{p^\ast-2}u+\frac{\alpha\gamma}{p^\ast}|u|^{\alpha-2}u|v|^{\beta}, &x\in\Omega,\\
-\Delta_p v-\frac{\lambda b}{p}|u|^a|v|^{b-2}v =\mu_2|v|^{p^\ast-2}v+\frac{\beta\gamma}{p^\ast}|u|^{\alpha}|v|^{\beta-2}v, &x\in\Omega,\\
u,v\ \text{in } D_0^{1,p}(\Omega),
\end{cases}
\end{equation}
where $\Delta_p u:=\text{div}(|\nabla u|^{p-2}\nabla u)$ is the $p$-Laplacian operator defined on $D^{1,p}(\mathbb{R}^N):=\{u\in L^{p^\ast}(\mathbb{R}^N):|\nabla u|\in
L^p(\mathbb{R}^N)\}$, endowed with norm
$\|u\|_{D^{1,p}}:=\big(\int_{\mathbb{R}^N}|\nabla
u|^p\text{d}x\big)^{\frac{1}{p}}$, $N\ge3$, $1<p<N$, $\lambda, \mu_1, \mu_2\ge 0$, $\gamma\neq0$, $a, b, \alpha, \beta > 1$ satisfy $a + b = p, \alpha + \beta = p^\ast:=\frac{Np}{N-p}$, the critical Sobolev exponent, $\Omega$ is $\mathbb{R}^N$ or a bounded domain in $\mathbb{R}^N$, and $D_0^{1,p}(\Omega)$ is the closure of $C_0^\infty(\Omega)$ in $D^{1,p}(\mathbb{R}^N)$. Note that we allow the powers in the coupling terms to be unequal.
We consider the two cases
\begin{itemize}
\item[${\bf (H_1)}$]
$\Omega=\mathbb{R}^N$, $\lambda=0, \mu_1, \mu_2>0$;
\item[${\bf (H_2)}$]
$\Omega$ is a bounded domain in $\mathbb{R}^N$, $\lambda>0, \mu_1, \mu_2=0, \gamma=1$.
\end{itemize}

Let
\begin{equation}\label{7}
S:=\inf_{u\in D_0^{1,p}(\Omega)\setminus\{0\}}\frac{\int_{\Omega}|\nabla u|^p\text{d}x} {\Big(\int_{\Omega}|u|^{p^\ast}\text{d}x\Big)^{\frac{p}{p^\ast}}}
\end{equation}
be the sharp constant of
imbedding for $D_0^{1,p}(\Omega)\hookrightarrow L^{p^\ast}(\Omega)$ (see, e.g., \cite{AdamsFournier.2003}). Then $S$ is independent of $\Omega$ and is attained only when $\Omega=\mathbb{R}^N$. In this case a minimizer $u \in D^{1,p}(\mathbb{R}^N)$ satisfies the critical $p$-Laplacian equation
\begin{equation}\label{3}
-\Delta_p u=|u|^{p^\ast-2}u, \quad x \in \R^N.
\end{equation}
Damascelli et al. \cite{DamascelliMerchanMontoroSciunzi.2014} recently showed that all solutions of \eqref{3} are radial and radially decreasing about some point in $\R^N$ when $1<p<2\le p^\ast$. Sciunzi \cite{p} extended this result to the case $2<p<N$. By exploiting the classification results in \cite{Bidaut-Veron.1989, GueddaVeron.1988}, we see that, for $1<p<N$, all positive solutions of (\ref{3}) are of the form
\begin{equation}\label{8}
U_{\varepsilon,y}(x):=\Big[N\Big(\frac{N-p}{p-1}\Big)^{p-1}\Big]^{\frac{N-p}{p^2}} \Big(\frac{\varepsilon^\frac{1}{p-1}}{\varepsilon^\frac{p}{p-1}+
|x-y|^\frac{p}{p-1}}\Big)^\frac{N-p}{p}, \quad \varepsilon>0,\, y\in\mathbb{R}^N,
\end{equation}
and
\begin{equation}\label{9}
\int_{\mathbb{R}^N}|\nabla U_{\varepsilon,y}|^p\text{d}x
=\int_{\mathbb{R}^N}|U_{\varepsilon,y}|^{p^\ast}\text{d}x=S^{\frac{N}{p}}.
\end{equation}

\newpage

In the case $(H_1)$, the energy functional associated with the system (\ref{6}) is given by
\begin{multline}
I(u,v)=\frac{1}{p}\int_{\mathbb{R}^N}\big(|\nabla u|^p+|\nabla
v|^p\big)-\frac{1}{p^\ast}\int_{\mathbb{R}^N}\big(\mu_1|u|^{p^\ast}+\mu_2|v|^{p^\ast}+ \gamma |u|^\alpha|v|^\beta\big),\\[5pt]
(u,v) \in D,
\end{multline}
where $D:=D^{1,p}(\mathbb{R}^N)\times D^{1,p}(\mathbb{R}^N)$, endowed with norm $\|(u,v)\|_D^p=\|u\|_{D^{1,p}}^p+\|v\|_{D^{1,p}}^p$. In this case, \eqref{6} with $\alpha=\beta$ and $p=2$ has been studied by Chen and Zou \cite{ChenZou.2012, ChenZou}. Define
\begin{multline*}
\mathcal{N}=\Big\{(u,v)\in
D: u \ne 0,v \ne 0,
\int_{\mathbb{R}^N}|\nabla
u|^p=\int_{\mathbb{R}^N}\big(\mu_1|u|^{p^\ast}+\frac{\alpha\gamma}{p^\ast} |u|^\alpha|v|^\beta\big),\\[5pt]
\int_{\mathbb{R}^N}|\nabla
v|^p=\int_{\mathbb{R}^N}\big(\mu_2|v|^{p^\ast}+\frac{\beta\gamma}{p^\ast} |u|^\alpha|v|^\beta\big)\Big\}.
\end{multline*}
It is easy to see that $\mathcal{N}\neq\emptyset$ and that any nontrivial solution of
(\ref{6}) is in $\mathcal{N}$.
By a nontrivial solution we mean a solution $(u,v)$ such that $u\neq0$ and $v\neq0$. A solution is called a least energy solution if its energy is minimal among energies of all nontrivial solutions. A solution $(u,v)$ is positive if $u>0$ and $v>0$, and semitrivial if it is of the form $(u,0)$ with $u \ne 0$, or $(0,v)$ with $v \ne 0$.
Set $A:=\inf_{(u,v)\in\mathcal{N}}I(u,v)$, and note that
\begin{multline*}
A=\inf_{(u,v)\in\mathcal{N}}\frac{1}{N}
\int_{\mathbb{R}^N}\big(|\nabla u|^p+ |\nabla
v|^p \big)\\[5pt]
=\inf_{(u,v)\in\mathcal{N}}\frac{1}{N}\int_{\mathbb{R}^N} \big(\mu_1|u|^{p^\ast}+\mu_2|v|^{p^\ast}+ \gamma |u|^\alpha|v|^\beta\big).
\end{multline*}
Consider the nonlinear system of equations
\begin{equation}\label{5}
\begin{split}
\begin{cases}\mu_1k^{\frac{p^\ast-p}{p}}+\frac{\alpha\gamma}{p^\ast} k^{\frac{\alpha-p}{p}}l^{\frac{\beta}{p}}=1,\\
\mu_2l^{\frac{p^\ast-p}{p}}+\frac{\beta\gamma}{p^\ast} k^{\frac{\alpha}{p}}l^{\frac{\beta-p}{p}}=1,\\
k>0,\ l>0.
\end{cases}
\end{split}
\end{equation}
Our main results in this case are the following.

\begin{theorem}\label{th1}
If $(H_1)$ holds and $\gamma<0$, then $A=\frac{1}{N}\big(\mu_1^{-\frac{N-p}{p}}+\mu_2^{-\frac{N-p}{p}}\big)S^{\frac{N}{p}}$ and $A$ is not attained.
\end{theorem}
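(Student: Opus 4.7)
The plan is to squeeze $A$ between matching upper and lower bounds, and then rule out attainment by invoking the classification \eqref{8} of Sobolev extremals.

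For the lower bound, pick any $(u,v)\in\mathcal{N}$. Since $\gamma<0$ and $\int_{\RN}|u|^\alpha|v|^\beta\geq 0$, the first Nehari identity yields $\int_{\RN}|\nabla u|^p\leq \mu_1\int_{\RN}|u|^{p^\ast}$; combining this with the Sobolev inequality $S\bigl(\int_{\RN}|u|^{p^\ast}\bigr)^{p/p^\ast}\leq \int_{\RN}|\nabla u|^p$ gives $\int_{\RN}|u|^{p^\ast}\geq (S/\mu_1)^{N/p}$, and therefore $\int_{\RN}|\nabla u|^p\geq \mu_1^{-(N-p)/p}S^{N/p}$. Running the same chain for $v$ with $\mu_2$ in place of $\mu_1$, and then using the already-noted identity $I(u,v)=\frac{1}{N}\int_{\RN}(|\nabla u|^p+|\nabla v|^p)$ valid on $\mathcal{N}$, yields
\[
A \;\geq\; \frac{1}{N}\bigl(\mu_1^{-(N-p)/p}+\mu_2^{-(N-p)/p}\bigr)S^{N/p}.
\]

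For the upper bound I use well-separated rescaled Aubin--Talenti bubbles. Set $\phi:=\mu_1^{-(N-p)/p^2}U_{1,0}$ and $\psi_n(x):=\mu_2^{-(N-p)/p^2}U_{1,y_n}(x)$ with $|y_n|\to\infty$. By the choice of prefactors and \eqref{9}, each decoupled Nehari identity $\int_{\RN}|\nabla\phi|^p=\mu_1\int_{\RN}\phi^{p^\ast}$ (and its analogue for $\psi_n$) holds exactly, while the interaction term $\int_{\RN}\phi^\alpha\psi_n^\beta\to 0$ by translation. I then look for $t_n,s_n>0$ with $(t_n\phi,s_n\psi_n)\in\mathcal{N}$: the two Nehari equations form a small perturbation of the decoupled system whose solution is $(t,s)=(1,1)$, so an elementary continuity/implicit function argument produces $t_n,s_n\to 1$. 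Evaluating $I$ along this sequence produces the matching upper bound, establishing the claimed value of $A$.

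To show that $A$ is not attained, I argue by contradiction: suppose $(u,v)\in\mathcal{N}$ realises $A$. Then every inequality in the lower-bound chain is an equality for both components. Sobolev equality forces $|u|$ and $|v|$ to be Sobolev extremals; applying the strong maximum principle for the $p$-Laplacian to $|u|$, $|v|$ rules out sign changes, so $u$ and $v$ are (up to sign) bubbles of the form \eqref{8}, in particular strictly of one sign pointwise. Consequently $\int_{\RN}|u|^\alpha|v|^\beta>0$ strictly. On the other hand, equality in $\int_{\RN}|\nabla u|^p\leq \mu_1\int_{\RN}|u|^{p^\ast}$ combined with the first Nehari identity forces $\frac{\alpha\gamma}{p^\ast}\int_{\RN}|u|^\alpha|v|^\beta=0$, which contradicts $\gamma\neq 0$.

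The main obstacle is this final squeezing step, since it must simultaneously extract two pieces of rigidity (Sobolev equality and vanishing interaction) from a single energy equality, and the argument crucially exploits the sign hypothesis $\gamma<0$ to make the Nehari identity yield the one-sided estimate needed for the lower bound; the upper-bound Nehari projection is routine but must be justified carefully to guarantee that $(t_n\phi,s_n\psi_n)$ exists for all large $n$.
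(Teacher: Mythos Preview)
Your argument is correct, and the computation of $A$ (lower bound via the Nehari constraints plus Sobolev, upper bound via well-separated bubbles with a perturbative Nehari projection) is exactly what the paper does.

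The non-attainment step, however, takes a genuinely different route from the paper. The paper first proves a constrained-criticality lemma (Lemma~\ref{l3}): it shows, via a $2\times 2$ Lagrange-multiplier system and a determinant computation exploiting $\gamma<0$, that any minimizer of $I$ on $\mathcal{N}$ is in fact a free critical point of $I$, i.e.\ a solution of the system \eqref{6}. It then passes to $(|u|,|v|)$, applies the strong maximum principle for the coupled $p$-Laplacian system to obtain $u,v>0$, and derives the strict inequality $\int|\nabla u|^p<\mu_1\int|u|^{p^\ast}$, contradicting \eqref{28}. Your argument bypasses Lemma~\ref{l3} entirely: you read off directly from equality in the lower-bound chain that (i) $u$ realizes equality in the Sobolev inequality, and (ii) the interaction integral vanishes. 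The classification \eqref{8} (or, equivalently, applying the strong maximum principle to the nonnegative Sobolev extremal $|u|$, which solves its own scalar Euler--Lagrange equation) then forces $|u|>0$ pointwise, and the same for $v$, contradicting (ii).

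Your path is shorter and more elementary for this theorem, since it avoids the Lagrange-multiplier computation and never needs to know that constrained minimizers solve the full system. The paper's detour through Lemma~\ref{l3}, on the other hand, yields a structural fact of independent interest (minimizers on $\mathcal{N}$ are genuine solutions when $\gamma<0$), and its contradiction relies only on the strict inequality $\int|u|^\alpha|v|^\beta>0$ rather than on the sharper Sobolev rigidity you invoke. One small cosmetic point: your appeal to the strong maximum principle is really being applied to the scalar equation satisfied by the nonnegative extremal $|u|$, not to the system; it would be cleaner simply to cite the explicit classification \eqref{8}, which already gives $|u|>0$ everywhere.
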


\begin{theorem}\label{th2}
If $(H_1)$ and
\begin{itemize}
\item[$(C_1)$]
$\frac{N}{2}<p<N, \alpha, \beta>p$, and
\begin{equation}\label{11}
0<\gamma\le\frac{3p^2}{(3-p)^2}\min\Big\{\frac{\mu_1}{\alpha}\Big(\frac{\alpha-p} {\beta-p}\Big)^{\frac{\beta-p}{p}},\ \frac{\mu_2}{\beta}\Big(\frac{\beta-p}{\alpha-p}\Big) ^{\frac{\alpha-p}{p}}\Big\}
\end{equation}
or
\item[$(C_2)$]
$\frac{2N}{N+2}<p<\frac{N}{2},\alpha,\beta<p$, and
\begin{equation}\label{21}
\aligned
\gamma\ge\frac{Np^2}{(N-p)^2}\max\Big\{\frac{\mu_1}{\alpha}\Big(\frac {p-\beta} {p-\alpha}\Big)^{\frac{p-\beta}{p}},\ \frac{\mu_2}{\beta}\Big(\frac{p-\alpha} {p-\beta}\Big) ^{\frac{p-\alpha}{p}}\Big\}
\endaligned
\end{equation}
\end{itemize}
hold, then $A=\frac{1}{N}(k_0+l_0)S^{\frac{N}{p}}$ and $A$ is attained by $(\sqrt[p]{k_0}U_{\varepsilon,y},\sqrt[p]{l_0}U_{\varepsilon,y})$, where $(k_0,l_0)$ satisfies (\ref{5}) and
\begin{equation}\label{15}
k_0=\min\{k:(k,l)\ \text{satisfies (\ref{5})}\}.
\end{equation}
\end{theorem}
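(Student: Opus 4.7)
My plan is to pin down $A=\frac{1}{N}(k_0+l_0)S^{N/p}$ by matching upper and lower bounds, both of which reduce to the algebraic system (\ref{5}), and then to read off the minimizer from the equality cases in the reduction.

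For the upper bound I would test $I$ on the two-parameter family $(sU_{\varepsilon,y},tU_{\varepsilon,y})$. Using (\ref{9}) together with $U_{\varepsilon,y}^{\alpha+\beta}=U_{\varepsilon,y}^{p^\ast}$, the quantities $\int|\nabla(sU_{\varepsilon,y})|^p$, $\int|sU_{\varepsilon,y}|^{p^\ast}$ and $\int|sU_{\varepsilon,y}|^\alpha|tU_{\varepsilon,y}|^\beta$ evaluate to $s^p S^{N/p}$, $s^{p^\ast}S^{N/p}$ and $s^\alpha t^\beta S^{N/p}$ respectively, so the two Nehari identities defining $\mathcal N$ reduce, under $(k,l):=(s^p,t^p)$, to the system (\ref{5}) itself. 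Any solution $(k,l)$ of (\ref{5}) then produces an element of $\mathcal N$ with energy $\frac{1}{N}(k+l)S^{N/p}$; the choice $(k,l)=(k_0,l_0)$ yields $A\le\frac{1}{N}(k_0+l_0)S^{N/p}$.

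For the lower bound, given arbitrary $(u,v)\in\mathcal N$, I would set $k:=\|u\|_{D^{1,p}}^p/S^{N/p}$ and $l:=\|v\|_{D^{1,p}}^p/S^{N/p}$, so that $I(u,v)=\frac{1}{N}(k+l)S^{N/p}$. The Sobolev inequality (\ref{7}) gives $\int|u|^{p^\ast}\le k^{p^\ast/p}S^{N/p}$ and the analogue for $v$; Hölder with exponents $p^\ast/\alpha$, $p^\ast/\beta$ yields $\int|u|^\alpha|v|^\beta\le k^{\alpha/p}l^{\beta/p}S^{N/p}$; and since $\gamma>0$ in both $(C_1)$ and $(C_2)$, inserting these estimates into the two Nehari identities produces the inequality form of (\ref{5}):
\[1\le \mu_1k^{(p^\ast-p)/p}+\tfrac{\alpha\gamma}{p^\ast}k^{(\alpha-p)/p}l^{\beta/p},\qquad 1\le \mu_2l^{(p^\ast-p)/p}+\tfrac{\beta\gamma}{p^\ast}k^{\alpha/p}l^{(\beta-p)/p}.\]
The lower bound then reduces to the purely algebraic claim that every $(k,l)>0$ satisfying this pair of inequalities also satisfies $k+l\ge k_0+l_0$.

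This algebraic minimization is the main obstacle and the place where $(C_1)$ and $(C_2)$ enter. Denoting by $\Gamma_1,\Gamma_2$ the two curves defined by equality in (\ref{5}), I would parametrize each as $l=\phi_i(k)$ and analyze monotonicity: under $(C_1)$ all coupling exponents are positive and both $\phi_i$ are strictly decreasing, whereas under $(C_2)$ the signs flip and the geometry is dual. In each case the plan is to show that $\Gamma_1\cap\Gamma_2$ is nonempty, let $(k_0,l_0)$ be the intersection with the smallest $k$-coordinate as in (\ref{15}), and then verify by a Lagrange-multiplier calculation that the gradient $(1,1)$ of $k+l$ lies in the inward normal cone of the feasible region at $(k_0,l_0)$, so that the minimum of $k+l$ over the feasible set is attained there. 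The explicit thresholds (\ref{11}) and (\ref{21}) on $\gamma$ are exactly what is needed to make both the intersection and the normal-cone condition quantitative, and extracting these sharp constants from the implicit relations is the technically delicate step. Attainment is then read off by tracking equality cases: equality in Sobolev forces $u,v$ to be Talenti extremals, so by (\ref{8}) $u=c_1U_{\varepsilon_1,y_1}$ and $v=c_2U_{\varepsilon_2,y_2}$; equality in Hölder forces $|u|^{p^\ast}\propto|v|^{p^\ast}$, which combined with (\ref{8}) forces $(\varepsilon_1,y_1)=(\varepsilon_2,y_2)$; equality in the inequalities returns (\ref{5}) for $(k,l)=(c_1^p,c_2^p)$; and the minimality characterization (\ref{15}) singles out $(k_0,l_0)$, yielding the minimizer $(\sqrt[p]{k_0}U_{\varepsilon,y},\sqrt[p]{l_0}U_{\varepsilon,y})$ of Theorem \ref{th2}.
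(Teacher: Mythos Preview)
Your overall reduction is exactly the paper's: the upper bound comes from testing with $(\sqrt[p]{k_0}U_{\varepsilon,y},\sqrt[p]{l_0}U_{\varepsilon,y})$, and the lower bound from applying Sobolev and H\"older to the two Nehari identities, which (since $\gamma>0$) yields precisely the inequality system $F_1(k,l)\ge 0$, $F_2(k,l)\ge 0$. Your choice $k=\|u\|_{D^{1,p}}^p/S^{N/p}$ is a harmless variant of the paper's choice $c=(\int|u|^{p^\ast})^{p/p^\ast}$; both lead to the same algebraic constraints. The attainment statement is also simpler than you make it: once $A=\frac{1}{N}(k_0+l_0)S^{N/p}$ is established from the two bounds, the explicit test function already realizes this value, so there is no need to chase equality cases in Sobolev/H\"older.

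The genuine gap is in the algebraic optimization. Saying that $(1,1)$ lies in the normal cone of the feasible set at $(k_0,l_0)$ is only a first-order necessary condition; it does not give a global minimum without convexity or some monotone structure, and here the feasible set $\{F_1\ge 0\}\cap\{F_2\ge 0\}$ is not convex. Under $(C_1)$ both boundary curves $\Gamma_1,\Gamma_2$ are strictly decreasing in the $(k,l)$-plane, so nothing in your sketch rules out multiple crossings at which the KKT condition holds with different values of $k+l$. Under $(C_2)$ the exponents $(\alpha-p)/p,(\beta-p)/p$ are negative, so $F_1$ is not even monotone in $k$, and the ``dual geometry'' is considerably more delicate than a sign flip.

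The paper resolves this with two different devices. For $(C_1)$ it passes to the variables $x=k/l$, $y=k+l$: the two constraints become $y^{(p^\ast-p)/p}\ge f_1(x)$ and $y^{(p^\ast-p)/p}\ge f_2(x)$, and the bound on $\gamma$ in (\ref{11}) is exactly what forces $f_1$ to be strictly decreasing and $f_2$ strictly increasing on $(0,\infty)$. Then $\min_x\max\{f_1,f_2\}$ is attained at the unique crossing, which simultaneously proves the lower bound and the uniqueness of the solution to (\ref{5}). For $(C_2)$ the argument is different: one parametrizes $\Gamma_1$ by $l=l(k)$, computes $l'(k)$, and shows via the threshold (\ref{21}) that $\min_k l'(k)\ge -1$, so $k\mapsto k+l(k)$ is strictly increasing along $\Gamma_1$ (and symmetrically along $\Gamma_2$). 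This monotonicity, together with the bound $(k_0+l_0)^{(p^\ast-p)/p}\max\{\mu_1,\mu_2\}<1$, is then used to show that the only point of the feasible set in $\{k+l\le k_0+l_0\}$ is $(k_0,l_0)$ itself. Your normal-cone heuristic does not capture either mechanism; to complete the proof you need one of these global monotonicity arguments (or an equivalent), and this is precisely where the explicit constants in (\ref{11}) and (\ref{21}) do their work.
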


\begin{theorem}\label{th3}
Assume that $\frac{2N}{N+2}<p<\frac{N}{2},\, \alpha,\beta<p$, and $(H_1)$ holds. If $\gamma>0$, then $A$ is attained by some $(U,V)$, where $U$ and $V$ are positive, radially symmetric, and decreasing.
\end{theorem}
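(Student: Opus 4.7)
The plan is to apply the direct method to the Nehari minimization, working within the cone of Schwarz-symmetric functions and then handling the critical-exponent loss of compactness by an energy comparison.

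First, take $\{(u_n, v_n)\} \subset \mathcal{N}$ with $I(u_n, v_n) \to A$. The identity $I|_{\mathcal{N}} = \frac{1}{N}\int_{\mathbb{R}^N}(|\nabla u|^p + |\nabla v|^p)$ immediately bounds the sequence in $D$. Since $I$ and $\mathcal{N}$ depend only on $(|u|,|v|)$, I replace $(u_n, v_n)$ by $(|u_n|, |v_n|)$ and then by their Schwarz symmetric rearrangements $(u_n^\ast, v_n^\ast)$: by Polya--Szego the gradient $L^p$-norms do not increase, by equimeasurability the $L^{p^\ast}$-norms are preserved, and by the Hardy--Littlewood rearrangement inequality combined with $\gamma > 0$, the coupling integral $\int_{\mathbb{R}^N} |u_n|^\alpha |v_n|^\beta$ does not decrease. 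The rearranged pair therefore satisfies the Nehari identities as \emph{inequalities} with the left-hand sides no larger and the right-hand sides no smaller than the originals; a two-parameter rescaling $(u_n^\ast, v_n^\ast) \mapsto (t_n u_n^\ast, s_n v_n^\ast)$ with $(t_n, s_n) \in (0,1]^2$ projects it back onto $\mathcal{N}$ without increasing the energy, the existence of such a pair following from a monotonicity argument on the two fibering equations. Thus, without loss of generality, the minimizing sequence consists of nonnegative, radially symmetric, nonincreasing functions.

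Second, after extracting a subsequence $(u_n, v_n) \rightharpoonup (U, V)$ weakly in $D$, pointwise a.e., and strongly in $L^q_{\loc}(\mathbb{R}^N)^2$ for $q \in [1, p^\ast)$. The Brezis--Lieb lemma applied to $|u_n|^{p^\ast}$, $|v_n|^{p^\ast}$ and the product $|u_n|^\alpha |v_n|^\beta$, together with Lions' second concentration-compactness principle for the $p$-Laplacian applied to each component, localizes any mass loss at the origin by the radial decreasing symmetry. Thus each defect measure is at most a single Dirac mass at $0$ satisfying $\mu_0 \ge S\nu_0^{p/p^\ast}$. Decomposing $(u_n, v_n) = (U, V) + (w_n, z_n)$ with $(w_n, z_n) \rightharpoonup 0$, the Nehari identities and the energy split into a contribution from $(U, V)$ and one from the concentrating remainder, whose masses $(k_1, l_1)$ must satisfy the algebraic system \eqref{5}.

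The main obstacle is to exclude the possibility that this concentrating remainder carries nontrivial mass. By \eqref{15}, any such solution of \eqref{5} gives $k_1 + l_1 \ge k_0 + l_0$, so the concentration would contribute at least $\frac{1}{N}(k_0 + l_0)S^{N/p}$ to the energy; I would close the argument by establishing a strict upper bound $A < \frac{1}{N}(k_0 + l_0)S^{N/p}$ via a test-function estimate using a radial ansatz built from the Aubin--Talenti profiles \eqref{8}, exploiting $\gamma > 0$ to save energy through the coupling term, whence the concentration masses must vanish and $(u_n, v_n) \to (U, V)$ strongly in $L^{p^\ast}$. Vanishing of either component is excluded because the first (resp.\ second) Nehari identity together with Sobolev's inequality forces $\|u_n\|_{L^{p^\ast}}, \|v_n\|_{L^{p^\ast}}$ to stay uniformly bounded away from zero. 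Consequently $(U, V) \in \mathcal{N}$ and $I(U, V) = A$. Nonnegativity, radial symmetry, and monotonicity are inherited from the minimizing sequence, and strict positivity follows from the strong maximum principle for the $p$-Laplacian applied to the Euler--Lagrange system satisfied by the minimizer.
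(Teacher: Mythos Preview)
Your approach has a genuine gap at the step where you rule out concentration. You propose to exclude a nontrivial concentrating remainder by proving the strict inequality $A < \tfrac{1}{N}(k_0+l_0)S^{N/p}$ via a test-function estimate. This inequality is in general \emph{false}: Theorem~\ref{th2} (case $(C_2)$) shows that, under the very hypotheses of Theorem~\ref{th3} together with \eqref{21}, one has exactly $A=\tfrac{1}{N}(k_0+l_0)S^{N/p}$, and the minimizer is $(\sqrt[p]{k_0}\,U_{\varepsilon,y},\sqrt[p]{l_0}\,U_{\varepsilon,y})$. In particular the sequence $(\sqrt[p]{k_0}\,U_{\varepsilon_n,0},\sqrt[p]{l_0}\,U_{\varepsilon_n,0})$ with $\varepsilon_n\to 0$ lies in $\mathcal N$, realizes the infimum, and converges weakly to $(0,0)$: the concentration profile \emph{is} the minimizer, so no energy-threshold argument can exclude it. Related to this, your claim that the defect masses must satisfy the \emph{equalities} \eqref{5} is not justified; concentration at a point only yields the Sobolev-type \emph{inequalities} \eqref{10}, and the passage from \eqref{10} to $k_1+l_1\ge k_0+l_0$ (Proposition~\ref{p2}) requires the extra hypothesis \eqref{21}, which Theorem~\ref{th3} does not assume.

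The paper avoids this obstruction by a completely different route. It works on the larger single-constraint manifold $\mathcal N'$, shows the scale invariance $A'(R)\equiv A'$, and then approximates by \emph{subcritical} problems \eqref{31} on $B(0,1)$ with exponents $p^\ast-2\varepsilon$. These subcritical problems are compact, so Proposition~\ref{p3} yields positive radial least-energy solutions $(u_\varepsilon,v_\varepsilon)$. A Pohozaev argument forces $\|u_\varepsilon\|_\infty+\|v_\varepsilon\|_\infty\to\infty$; after the blow-up rescaling $U_\varepsilon(x)=K_\varepsilon^{-1}u_\varepsilon(K_\varepsilon^{-a_\varepsilon}x)$ one passes to a limit $(U,V)$ that solves \eqref{6} and satisfies $I(U,V)=A'=A$. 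The nontriviality of both components comes from \eqref{30} (the strict inequality $A'<\min\{I(\omega_{\mu_1},0),I(0,\omega_{\mu_2})\}$), not from a lower bound on $\|u_n\|_{p^\ast}$ along a minimizing sequence as you suggest; note that your argument for such a lower bound also breaks down here because $\alpha,\beta<p$.
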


\begin{theorem}\label{th4}(Multiplicity)
Assume that $\frac{2N}{N+2}<p<\frac{N}{2},\, \alpha,\beta<p$, and $(H_1)$ holds. There exists
$$
\gamma_1\in\Big(0,\frac{Np^2}{(N-p)^2}\max\Big\{\frac{\mu_1}{\alpha} \Big(\frac{2-\beta} {2-\alpha}\Big)^{\frac{2-\beta}{2}},\ \frac{\mu_2}{\beta}\Big(\frac{2-\alpha}{2-\beta}\Big) ^{\frac{2-\alpha}{2}}\Big\}\Big]
$$
such that for any $\gamma\in(0,\gamma_1)$, there exists a solution $\big(k(\gamma),l(\gamma)\big)$ of (\ref{5}) satisfying
$$
I\big(\sqrt[p]{k(\gamma)}U_{\varepsilon,y}, \sqrt[p]{l(\gamma)}U_{\varepsilon,y}\big)>A
$$
and $\big(\sqrt[p]{k(\gamma)}U_{\varepsilon,y}, \sqrt[p]{l(\gamma)}U_{\varepsilon,y}\big)$ is a (second) positive solution of (\ref{6}).
\end{theorem}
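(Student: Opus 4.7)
The plan is to search among the one-parameter family of same-profile candidates $(u,v)=(\sqrt[p]{k}\,U_{\varepsilon,y},\sqrt[p]{l}\,U_{\varepsilon,y})$, $k,l>0$. Plugging this ansatz into \eqref{6} under $(H_1)$, using $-\Delta_p U_{\varepsilon,y}=U_{\varepsilon,y}^{p^\ast-1}$ and $\alpha+\beta=p^\ast$, the system reduces (after dividing through by $U_{\varepsilon,y}^{p^\ast-1}$) to the algebraic system \eqref{5}; the identity \eqref{9} then yields
\begin{equation*}
I(\sqrt[p]{k}\,U_{\varepsilon,y},\sqrt[p]{l}\,U_{\varepsilon,y})=\tfrac{k+l}{N}S^{N/p}.
\end{equation*}
Positivity of $u,v$ follows from $k,l>0$, so it is enough to produce, for every small $\gamma>0$, a positive solution $(k(\gamma),l(\gamma))$ of \eqref{5} whose energy strictly exceeds $A$; the resulting pair is then automatically a second positive solution, distinct from the least-energy one given by Theorem~\ref{th3}.

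To exhibit such a pair I would apply the implicit function theorem at $\gamma=0$. At $\gamma=0$ the system \eqref{5} decouples into $\mu_i k^{(p^\ast-p)/p}=1$, with unique positive solution $\bar k=\mu_1^{-(N-p)/p}$, $\bar l=\mu_2^{-(N-p)/p}$; the Jacobian in $(k,l)$ at $(\bar k,\bar l,0)$ is diagonal with strictly positive entries $\mu_i\frac{p^\ast-p}{p}\bar k^{(p^\ast-2p)/p}$, hence invertible. The IFT then produces a smooth branch $(k(\gamma),l(\gamma))$ of solutions of \eqref{5} on some interval $(0,\gamma_0)$ with $(k(\gamma),l(\gamma))\to(\bar k,\bar l)$ as $\gamma\to 0^+$, and whose energy satisfies
\begin{equation*}
E(\gamma):=\tfrac{k(\gamma)+l(\gamma)}{N}S^{N/p}\longrightarrow\tfrac{1}{N}\bigl(\mu_1^{-(N-p)/p}+\mu_2^{-(N-p)/p}\bigr)S^{N/p}.
\end{equation*}

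To force $E(\gamma)>A$ for small $\gamma$, the plan is to produce a second element of $\mathcal{N}$ with strictly smaller energy. Setting $l=sk$ in \eqref{5} and using $\alpha+\beta=p^\ast$ collapses \eqref{5} to the scalar equation $\mathcal{A}(s)=\mathcal{B}(s)$, where
\begin{equation*}
\mathcal{A}(s):=\mu_1+\tfrac{\alpha\gamma}{p^\ast}s^{\beta/p},\qquad \mathcal{B}(s):=\mu_2 s^{(p^\ast-p)/p}+\tfrac{\beta\gamma}{p^\ast}s^{(\beta-p)/p},
\end{equation*}
together with $k=\mathcal{A}(s)^{-(N-p)/p}$. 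The IFT branch corresponds to a root $s(\gamma)\to(\mu_1/\mu_2)^{(N-p)/p}$. Since $\alpha,\beta<p$, the exponent $(\beta-p)/p$ is negative, so $\mathcal{B}(s)\to+\infty$ while $\mathcal{A}(s)\to\mu_1$ as $s\to 0^+$, forcing $h:=\mathcal{A}-\mathcal{B}$ to satisfy $h(s)\to-\infty$; combined with $h(s(\gamma))=0$, the intermediate value theorem delivers a further root $s_\ast(\gamma)\in(0,s(\gamma))$. The dominant balance $\mu_1\sim\tfrac{\beta\gamma}{p^\ast}s_\ast^{(\beta-p)/p}$ gives $s_\ast(\gamma)\to 0$, so $k_\ast:=\mathcal{A}(s_\ast)^{-(N-p)/p}\to\mu_1^{-(N-p)/p}$ and $l_\ast:=s_\ast k_\ast\to 0$; hence $(\sqrt[p]{k_\ast}\,U_{\varepsilon,y},\sqrt[p]{l_\ast}\,U_{\varepsilon,y})\in \mathcal{N}$ and
\begin{equation*}
A\le \tfrac{k_\ast+l_\ast}{N}S^{N/p}\longrightarrow\tfrac{1}{N}\mu_1^{-(N-p)/p}S^{N/p},
\end{equation*}
which is strictly less than $\lim_{\gamma\to 0^+}E(\gamma)$ by the positive quantity $\tfrac{1}{N}\mu_2^{-(N-p)/p}S^{N/p}$. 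Picking $\gamma_1\in(0,\gamma_0]$ small enough that $A<E(\gamma)$ throughout $(0,\gamma_1)$, and no larger than the explicit upper bound displayed in the statement, completes the construction.

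The hardest step will be the quantitative one-variable analysis of $h(s)=\mathcal{A}(s)-\mathcal{B}(s)$ in the previous paragraph: one must upgrade the $s\to 0^+$ blow-up into a uniform-in-$\gamma$ existence statement for the auxiliary root $s_\ast(\gamma)$, derive its leading-order asymptotic $s_\ast(\gamma)\sim\bigl(\beta\gamma/(\mu_1 p^\ast)\bigr)^{p/(p-\beta)}$ with sharp error control, and then match the admissible interval of $\gamma$ to the explicit bound for $\gamma_1$ stated in the theorem, which likely emerges from optimising $h$ (or an equivalent auxiliary functional) over $s>0$ in terms of the parameters $\mu_1,\mu_2,\alpha,\beta,N,p$.
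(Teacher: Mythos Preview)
Your implicit function theorem step, producing the branch $(k(\gamma),l(\gamma))$ with $k(\gamma)+l(\gamma)\to\mu_1^{-(N-p)/p}+\mu_2^{-(N-p)/p}$, is exactly what the paper does. The difference lies in how you bound $A$ from above. The paper does not construct a second root of \eqref{5}; instead it invokes two facts already proved in Theorem~\ref{th3}: the strict inequality $A'<\min\big\{\tfrac{1}{N}\mu_1^{-(N-p)/p}S^{N/p},\tfrac{1}{N}\mu_2^{-(N-p)/p}S^{N/p}\big\}$ (obtained by perturbing the semitrivial elements of $\mathcal N'$, see \eqref{30}) and the identity $A=A'$ (equation \eqref{411}). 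These combine to give $E(\gamma)>A$ for small $\gamma$ in one line. Your route via the auxiliary root $s_\ast(\gamma)$ of $\mathcal A=\mathcal B$ is a legitimate alternative: it yields an explicit element of $\mathcal N$ (not merely $\mathcal N'$) with energy tending to $\tfrac{1}{N}\mu_1^{-(N-p)/p}S^{N/p}$, so you avoid relying on the nontrivial equality $A=A'$. The cost is the extra one-variable analysis; the benefit is that you only need the \emph{statement} of Theorem~\ref{th3} (that $A$ is attained), not the internals of its proof.

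One correction to your assessment of the difficulty. The ``hardest step'' you anticipate is a phantom. You do not need sharp asymptotics for $s_\ast(\gamma)$: it suffices that for any fixed $s_0\in\big(0,(\mu_1/\mu_2)^{(N-p)/p}\big)$ one has $h(s_0)=\mu_1-\mu_2 s_0^{(p^\ast-p)/p}+O(\gamma)>0$ for small $\gamma$, which together with $h(s)\to-\infty$ as $s\to0^+$ gives $s_\ast(\gamma)\in(0,s_0)$; letting $s_0\downarrow0$ then forces $s_\ast(\gamma)\to0$, hence $k_\ast+l_\ast\to\mu_1^{-(N-p)/p}$, which is already enough. Likewise, the explicit upper bound on $\gamma_1$ displayed in the theorem is not something to be \emph{derived}: the statement only asserts $\gamma_1\in(0,M]$ for that constant $M$, and since your $\gamma_1$ can be taken arbitrarily small, this inclusion is automatic. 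The paper's proof does not engage with that bound either; it simply records that the multiplicity regime is complementary to the large-$\gamma$ regime \eqref{21} of Theorem~\ref{th2}.
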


For the case $(H_2)$, we have the following theorem.

\begin{theorem}\label{th5}
If $(H_2)$ holds, $p \le \sqrt{N}$, and
\[
0 < \lambda < \frac{p}{(a^a\, b^b)^\frac{1}{p}}\, \lambda_1(\Omega),
\]
where $\lambda_1(\Omega) > 0$ is the first Dirichlet eigenvalue of $- \Delta_p$ in $\Omega$, then the system (\ref{6}) has a nontrivial nonnegative solution.
\end{theorem}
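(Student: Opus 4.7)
The plan is to apply the Mountain Pass Theorem to the truncated functional
\[
J_+(u,v)=\frac{1}{p}\int_\Omega(|\nabla u|^p+|\nabla v|^p)-\frac{\lambda}{p}\int_\Omega u_+^av_+^b-\frac{1}{p^\ast}\int_\Omega u_+^\alpha v_+^\beta
\]
on $W:=D_0^{1,p}(\Omega)\times D_0^{1,p}(\Omega)$, and then to show the resulting critical point is nontrivial and nonnegative, hence a solution of (\ref{6}). The first task is to verify the mountain-pass geometry. Applying Young's inequality with a free parameter $\tau>0$, then the Poincar\'e inequality, and finally choosing $\tau=(b/a)^{ab/p^2}$ to balance the two sides, yields the sharp bound
\[
\int_\Omega u_+^av_+^b\le \frac{(a^ab^b)^{1/p}}{p\,\lambda_1(\Omega)}\int_\Omega(|\nabla u|^p+|\nabla v|^p),
\]
whose constant is exactly reciprocal to the threshold appearing in the hypothesis on $\lambda$; thus there exists $\kappa\in(0,1)$ with $\lambda\int_\Omega u_+^av_+^b\le\kappa\int_\Omega(|\nabla u|^p+|\nabla v|^p)$. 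Combined with Sobolev on the critical term this gives $J_+(0,0)=0$, $J_+\ge\delta>0$ on a small sphere in $W$, and $J_+(tu_0,tv_0)\to-\infty$ for any fixed $u_0,v_0\ge0$ with $u_0v_0\not\equiv0$.

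Next I would identify the compactness threshold. Writing $c$ for the mountain-pass value, boundedness of any $(PS)_c$ sequence is free from the coercivity above. By the Lions concentration-compactness principle applied componentwise to $(u_n,v_n)$, the only obstruction to strong convergence is concentration at finitely many points, and at each such point the rescaled limit profile solves the limiting system on $\R^N$ corresponding to $\lambda=\mu_1=\mu_2=0,\gamma=1$; by the classification in (\ref{8}) any nontrivial such profile is a bubble $(k^{1/p}U_{\varepsilon,y},\,l^{1/p}U_{\varepsilon,y})$ with $(k,l)$ satisfying the specialisation of (\ref{5}). Multiplying the bubble equations by their respective components and using (\ref{9}) forces the Nehari identity $k+l=k^{\alpha/p}l^{\beta/p}$, so each bubble carries energy $\tfrac{1}{N}(k+l)S^{N/p}$. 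Minimising over admissible $(k,l)$ defines $c^\ast:=\tfrac{1}{N}(k_0+l_0)S^{N/p}$, and $(PS)_c$ holds for every $c\in(0,c^\ast)$.

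The heart of the argument is the strict inequality $c<c^\ast$, which I would establish by inserting the truncated Talenti function $w_\varepsilon=U_{\varepsilon,y_0}\phi$ (with $\phi\in C_c^\infty(\Omega)$ cut off near some $y_0\in\Omega$) into the admissible ray $t\mapsto(tk_0^{1/p}w_\varepsilon,\,tl_0^{1/p}w_\varepsilon)$ and maximising in $t\ge0$. The closed form of $\max_tJ_+$, the Nehari identity $k_0+l_0=k_0^{\alpha/p}l_0^{\beta/p}$, and the classical expansions $\|\nabla w_\varepsilon\|_p^p=S^{N/p}+O(\varepsilon^{(N-p)/(p-1)})$, $\|w_\varepsilon\|_{p^\ast}^{p^\ast}=S^{N/p}+O(\varepsilon^{N/(p-1)})$ together give
\[
\max_{t\ge0}J_+=c^\ast-\lambda\,C_1\,\|w_\varepsilon\|_p^p+O(\varepsilon^{(N-p)/(p-1)}),\qquad C_1>0.
\]
Since $\|w_\varepsilon\|_p^p$ is of order $\varepsilon^p$ when $p^2<N$ and of order $\varepsilon^p|\log\varepsilon|$ when $p^2=N$, the $\lambda$-term dominates the positive error precisely when $(N-p)/(p-1)\ge p$, equivalently $p\le\sqrt N$; for $\varepsilon$ small this yields $c\le\max_tJ_+<c^\ast$.

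To finish, the deformation lemma produces $(u,v)\in W$ with $J_+(u,v)=c>0$ and $J_+'(u,v)=0$. Testing the Euler equation with $(-u_-,0)$ and $(0,-v_-)$, and noting that the nonlinearities of $J_+$ vanish on the negative parts of their arguments, gives $\int_\Omega|\nabla u_-|^p=\int_\Omega|\nabla v_-|^p=0$, so $u,v\ge0$; hence $u_+=u$, $v_+=v$, and $(u,v)$ solves (\ref{6}). Finally $c>0$ rules out $(u,v)\equiv(0,0)$, and the coupling forbids semitrivial solutions because $v\equiv0$ (together with $b,\beta>1$) reduces the first equation to $-\Delta_pu=0$ on $\Omega$, forcing $u\equiv0$. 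The main obstacle is the energy estimate of the third paragraph: the restriction $p\le\sqrt N$ is precisely what makes the subcritical-coupling gain $\lambda\,\|w_\varepsilon\|_p^p$ dominate the boundary-induced error in the Talenti bubble, and without it the standard mountain-pass scheme would not produce a level below $c^\ast$.
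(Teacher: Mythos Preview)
Your overall strategy---truncated mountain-pass functional, local minimum at the origin via the sharp constant $\frac{p}{(a^ab^b)^{1/p}}\lambda_1(\Omega)$, compactness below a critical threshold, and the Talenti test-function estimate under $p\le\sqrt{N}$---matches the paper's proof exactly. Your threshold $c^\ast=\tfrac{1}{N}(k_0+l_0)S^{N/p}$ is in fact the same number as the paper's $\tfrac{1}{N}S_{\alpha,\beta}^{N/p}$, where $S_{\alpha,\beta}:=\tfrac{p^\ast}{(\alpha^\alpha\beta^\beta)^{1/p^\ast}}S$: maximising $k^{\alpha/p}l^{\beta/p}$ on $k+l=s$ gives $k_0+l_0=\bigl[(p^\ast)^{p^\ast}/(\alpha^\alpha\beta^\beta)\bigr]^{1/(p^\ast-p)}=S_{\alpha,\beta}^{N/p}/S^{N/p}$, and your test direction $(k_0^{1/p}w_\varepsilon,l_0^{1/p}w_\varepsilon)$ is a scalar multiple of the paper's $(\alpha^{1/p}u_{\varepsilon,\rho},\beta^{1/p}u_{\varepsilon,\rho})$, so the energy computations coincide. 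Your final remark ruling out semitrivial critical points is also correct and needed for ``nontrivial'' in the paper's sense.

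There is, however, a gap in your compactness paragraph. You invoke (\ref{8}) to claim that every rescaled limit profile of a concentrating $(PS)$ sequence for the \emph{system} is of the form $(k^{1/p}U_{\varepsilon,y},l^{1/p}U_{\varepsilon,y})$. But (\ref{8}) classifies positive solutions of the \emph{scalar} equation $-\Delta_p u=|u|^{p^\ast-2}u$; no classification of entire solutions to the coupled limiting system $-\Delta_p u=\tfrac{\alpha}{p^\ast}u^{\alpha-1}v^\beta$, $-\Delta_p v=\tfrac{\beta}{p^\ast}u^\alpha v^{\beta-1}$ is known or proved here, so this step is unjustified as written. Fortunately the classification is unnecessary for the energy lower bound: any nontrivial profile $(U,V)$ lies on the limiting Nehari set, hence $\int(|\nabla U|^p+|\nabla V|^p)=\int U^\alpha V^\beta\le S_{\alpha,\beta}^{-p^\ast/p}\bigl(\int|\nabla U|^p+|\nabla V|^p\bigr)^{p^\ast/p}$, which already forces its energy to be at least $\tfrac{1}{N}S_{\alpha,\beta}^{N/p}=c^\ast$. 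The paper bypasses concentration-compactness entirely with exactly this inequality: Proposition~\ref{Proposition 2} shows directly that if a bounded $(PS)_c$ sequence with $0\ne c<\tfrac{1}{N}S_{\alpha,\beta}^{N/p}$ converged weakly to zero, then from $\|w_j\|^p-\|w_j\|^{p^\ast}/S_{\alpha,\beta}^{p^\ast/p}\le o(1)$ one gets $\|w_j\|^p\ge S_{\alpha,\beta}^{N/p}+o(1)$ and hence $c\ge\tfrac{1}{N}S_{\alpha,\beta}^{N/p}$, a contradiction. This is both shorter and avoids the unsupported classification step.
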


\section{Proof of Theorem \ref{th1}}

\begin{lemma}\label{l3} Assume that $(H_1)$ holds and $-\infty<\gamma<0$. If $A$ is attained by a couple $(u,v)\in\mathcal{N}$, then $(u,v)$ is
a critical point of $I$, i.e., $(u,v)$ is a solution of
(\ref{6}).
\end{lemma}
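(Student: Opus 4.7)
My plan is to apply the Lagrange multiplier rule on the Nehari manifold $\mathcal{N}$, which is cut out on the open set $\{u\ne 0,\, v\ne 0\}\subset D$ by the two smooth constraints
\[
g_1(u,v) := \int_{\mathbb{R}^N}|\nabla u|^p - \mu_1|u|^{p^\ast} - \tfrac{\alpha\gamma}{p^\ast}|u|^\alpha|v|^\beta,\qquad g_2(u,v) := \int_{\mathbb{R}^N}|\nabla v|^p - \mu_2|v|^{p^\ast} - \tfrac{\beta\gamma}{p^\ast}|u|^\alpha|v|^\beta.
\]
The aim is to produce multipliers $\theta_1,\theta_2$ with $I'(u,v) = \theta_1 g_1'(u,v)+\theta_2 g_2'(u,v)$ and then to show that both multipliers are forced to vanish.

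The key step is to test this Lagrange identity against the two coordinate directions $(u,0)$ and $(0,v)$. A short computation shows $\langle I'(u,v),(u,0)\rangle = g_1(u,v)$ and $\langle I'(u,v),(0,v)\rangle = g_2(u,v)$, both of which vanish on $\mathcal{N}$, so the multipliers satisfy the homogeneous $2\times 2$ system $M(\theta_1,\theta_2)^\top = 0$ with $M_{ij} := \langle g_j'(u,v), e_i\rangle$, $e_1 := (u,0)$, $e_2 := (0,v)$. Writing $T := \int_{\mathbb{R}^N}|u|^\alpha|v|^\beta > 0$, the off-diagonal entries compute directly to $M_{12} = M_{21} = -\tfrac{\alpha\beta\gamma}{p^\ast}T$. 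The raw expansion of $M_{11}$ reads $p\int_{\mathbb{R}^N}|\nabla u|^p - p^\ast\mu_1\int_{\mathbb{R}^N}|u|^{p^\ast} - \tfrac{\alpha^2\gamma}{p^\ast}T$; substituting the first Nehari identity in the form $\mu_1\int_{\mathbb{R}^N}|u|^{p^\ast} = \int_{\mathbb{R}^N}|\nabla u|^p - \tfrac{\alpha\gamma}{p^\ast}T$ and using $p^\ast-\alpha = \beta$ transforms it into
\[
M_{11} = (p-p^\ast)\int_{\mathbb{R}^N}|\nabla u|^p + \tfrac{\alpha\beta\gamma}{p^\ast}T,
\]
with the analogous identity for $M_{22}$.

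Because $\gamma<0$ and $p<p^\ast$, both diagonal entries are strictly negative while $B := M_{12} = M_{21} > 0$. Setting $a_1 := (p^\ast-p)\int_{\mathbb{R}^N}|\nabla u|^p > 0$ and $a_2 := (p^\ast-p)\int_{\mathbb{R}^N}|\nabla v|^p > 0$, one has $M_{ii} = -(a_i+B)$, so
\[
\det M = (a_1+B)(a_2+B) - B^2 = a_1 a_2 + (a_1+a_2)\,B > 0.
\]
This strict positivity serves two purposes: it supplies the constraint qualification (linear independence of $g_1'(u,v), g_2'(u,v)$) justifying the Lagrange rule, and it forces $\theta_1 = \theta_2 = 0$, whence $I'(u,v) = 0$. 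I expect the main obstacle to be precisely the sign analysis of the diagonal entries: a crude expansion leaves a term proportional to $\alpha(p-\alpha)\gamma\,T$, whose sign is not prescribed by the hypotheses since the relation between $\alpha$ and $p$ is free; one must therefore use the Nehari identity to eliminate $\mu_i\int|\cdot|^{p^\ast}$ specifically (rather than $\int|\nabla \cdot|^p$) in order to land on a manifestly single-signed expression.
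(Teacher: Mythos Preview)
Your proposal is correct and follows essentially the same route as the paper: Lagrange multipliers on $\mathcal{N}=\{g_1=0\}\cap\{g_2=0\}$, test the multiplier identity against $(u,0)$ and $(0,v)$, and show the resulting $2\times 2$ matrix is nonsingular so that the multipliers vanish. The only difference is algebraic bookkeeping---the paper eliminates $\int|\nabla u|^p$ via the Nehari identity and then uses a diagonal-dominance argument ($|M_{ii}|>M_{ij}$ because the row sums equal $-(p^\ast-p)\int|\nabla\cdot|^p<0$ while the off-diagonals are positive), whereas you eliminate $\mu_i\int|\cdot|^{p^\ast}$ and obtain the explicit formula $\det M=a_1a_2+(a_1+a_2)B$; one minor slip is that $T=\int|u|^\alpha|v|^\beta$ need not be strictly positive (disjoint supports), but your determinant formula only needs $B\ge 0$ together with $a_1,a_2>0$, so the conclusion is unaffected.
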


\begin{proof} Define
$$
\aligned \mathcal{N}_1:=\Big\{&(u,v)\in D:\ u\not\equiv0,\ v\not\equiv0,\\ &G_1(u,v):=\int_{\mathbb{R}^N}|\nabla
u|^p-\int_{\mathbb{R}^N}\big(\mu_1|u|^{p^\ast}+\frac{\alpha\gamma}{p^\ast} |u|^\alpha|v|^\beta\big)=0\Big\},\\
\mathcal{N}_2:=\Big\{&(u,v)\in D:\ u\not\equiv0,\ v\not\equiv0,\\ &G_2(u,v):=\int_{\mathbb{R}^N}|\nabla
v|^p-\int_{\mathbb{R}^N}\big(\mu_2|v|^{p^\ast}+\frac{\beta\gamma}{p^\ast} |u|^\alpha|v|^\beta\big)=0\Big\}.
\endaligned
$$
Obviously, $\mathcal{N}=\mathcal{N}_1\cap\mathcal{N}_2$. Suppose that $(u,v)\in\mathcal{N}$ is a minimizer for $I$ restricted to $\mathcal{N}$. It follows from the standard minimization theory that there exist two Lagrange multipliers $L_1,L_2\in\mathbb{R}$ such that
$$
I'(u,v)+L_1G_1'(u,v)+L_2G_2'(u,v)=0.
$$
Noticing that
$$
\aligned I'(u,v)(u,0)&=G_1(u,v)=0,\\
I'(u,v)(0,v)&=G_2(u,v)=0,\\
G_1'(u,v)(u,0)&=-(p^\ast-p)\int_{\mathbb{R}^N}\mu_1|u|^{p^\ast}+(p-\alpha) \int_{\mathbb{R}^N}\frac{\alpha\gamma}{p^\ast}|u|^\alpha|v|^\beta,\\
G_1'(u,v)(0,v)&=-\beta\int_{\mathbb{R}^N}\frac{\alpha\gamma}{p^\ast} |u|^\alpha|v|^\beta>0,\\
G_2'(u,v)(u,0)&=-\alpha\int_{\mathbb{R}^N}\frac{\beta\gamma}{p^\ast} |u|^\alpha|v|^\beta>0,\\
G_2'(u,v)(0,v)&=-(p^\ast-p)\int_{\mathbb{R}^N}\mu_2|v|^{p^\ast}+(p-\beta) \int_{\mathbb{R}^N}\frac{\beta\gamma}{p^\ast}|u|^\alpha|v|^\beta,
\endaligned
$$
we get that
$$
\aligned
\begin{cases}G_1'(u,v)(u,0)L_1+G_2'(u,v)(u,0)L_2=0,\\
G_1'(u,v)(0,v)L_1+G_2'(u,v)(0,v)L_2=0,
\end{cases}
\endaligned
$$
and
$$
\aligned
G_1'(u,v)(u,0)+G_1'(u,v)(0,v)=-(p^\ast-p)\int_{\mathbb{R}^N}|\nabla u|^p\le0,\\
G_2'(u,v)(u,0)+G_2'(u,v)(0,v)=-(p^\ast-p)\int_{\mathbb{R}^N}|\nabla v|^p\le0.
\endaligned
$$
We claim that $\int_{\mathbb{R}^N}|\nabla u|^p>0$. Indeed, if $\int_{\mathbb{R}^N}|\nabla u|^p=0$, then by (\ref{7}), we have $\int_{\mathbb{R}^N}|u|^{p^\ast}\le S^{-\frac{p^\ast}{p}}\Big(\int_{\mathbb{R}^N}|\nabla u|^p\Big)^\frac{p^\ast}{p}=0$. Thus, a desired contradiction comes out, $u\equiv0$ almost everywhere in $\mathbb{R}^N$. Similarly, $\int_{\mathbb{R}^N}|\nabla v|^p>0$. Hence,
$$
\aligned
\big|G_1'(u,v)(u,0)\big|=-G_1'(u,v)(u,0)> G_1'(u,v)(0,v),\\
\big|G_2'(u,v)(0,v)\big|=-G_2'(u,v)(0,v)> G_2'(u,v)(u,0).
\endaligned
$$
Define the matrix
$$
\aligned M:= \left(\begin{array}{cc}
     G_1'(u,v)(u,0) & G_2'(u,v)(u,0)\\
     G_1'(u,v)(0,v) & G_2'(u,v)(0,v)\\
     \end{array}\right),
\endaligned
$$
then,
$$
\aligned \det(M)=&\big|G_1'(u,v)(u,0)\big|\cdot\big|G_2'(u,v)(0,v)\big|\\ &-G_1'(u,v)(0,v)\cdot G_2'(u,v)(u,0)>0,
\endaligned
$$
which means that $L_1=L_2=0$. This completes the proof.
\end{proof}

\noindent{\bf Proof of Theorem \ref{th1}.} It is standard to see that $A>0$.
By (\ref{8}), we know that
$\omega_{\mu_i}:=\mu_i^{\frac{p-N}{p^2}}U_{1,0}$ satisfies $-\Delta_p u=\mu_i
|u|^{p^\ast-2}u$ in $\mathbb{R}^N$, where $i=1,2$. Set
$e_1=(1,0,\cdots,0)\in\mathbb{R}^N$ and
$$
\big(u_R(x),v_R(x)\big)=\big(\omega_{\mu_1}(x),\omega_{\mu_2}(x+Re_1)\big),
$$
where $R$ is a positive number. Then, $v_R\rightharpoonup0$
weakly in $D^{1,2}(\mathbb{R}^N)$ and $v_R\rightharpoonup0$ weakly
in $L^{p^\ast}(\mathbb{R}^N)$ as $R\to+\infty$. Hence,
$$
\aligned
\lim_{R\to+\infty}\int_{\mathbb{R}^N}u_R^\alpha v_R^\beta\text{d}x
&=\lim_{R\to+\infty}\int_{\mathbb{R}^N}u_R^\alpha v_R^{\frac{\alpha}{p^\ast-1}}v_R^{\frac{p^\ast(\beta-1)}{p^\ast-1}}\text{d}x\\
&\le\lim_{R\to+\infty}\Big(\int_{\mathbb{R}^N}u_R^{p^\ast-1}v_R\text{d}x\Big) ^{\frac{\alpha}{p^\ast-1}}
\Big(\int_{\mathbb{R}^N}v_R^{p^\ast}\text{d}x\Big)^{\frac{\beta-1}{p^\ast-1}}\\
&=0.
\endaligned
$$
Therefore, for $R>0$ sufficiently large, the system
$$
\aligned
\begin{cases}\int_{\mathbb{R}^N}|\nabla u_R|^p\text{d}x=
\int_{\mathbb{R}^N}\mu_1u_R^{p^\ast}\text{d}x\\
\qquad=t_R^{\frac{p^\ast-p}{p}}\int_{\mathbb{R}^N}\mu_1u_R^{p^\ast}\text{d}x+
t_R^{\frac{\alpha-p}{p}}s_R^{\frac{\beta}{p}}\int_{\mathbb{R}^N} \frac{\alpha\gamma}{p^\ast} u_R^\alpha v_R^\beta\text{d}x,\\
\int_{\mathbb{R}^N}|\nabla v_R|^p\text{d}x=
\int_{\mathbb{R}^N}\mu_2v_R^{p^\ast}\text{d}x\\
\qquad=s_R^{\frac{p^\ast-p}{p}}\int_{\mathbb{R}^N}\mu_2v_R^{p^\ast}\text{d}x+
t_R^{\frac{\alpha}{p}}s_R^{\frac{\beta-p}{p}}\int_{\mathbb{R}^N} \frac{\beta\gamma}{p^\ast} u_R^\alpha v_R^\beta\text{d}x
\end{cases}
\endaligned
$$
has a solution $(t_R,s_R)$ with
$$
\lim_{R\to +\infty}\big(|t_R-1|+|s_R-1|\big)=0.
$$
Furthermore, $(\sqrt[p]{t_R}u_R,\sqrt[p]{s_R}v_R)\in\mathcal{N}$. Then, by
(\ref{9}), we obtain that
$$
\aligned A&=\inf_{(u,v)\in\mathcal{N}}I(u,v)\le I(\sqrt[p]{t_R}u_R,\sqrt[p]{s_R}v_R)\\
&=\frac{1}{N}\Big(t_R\int_{\mathbb{R}^N}|\nabla u_R|^p\text{d}x+
s_R\int_{\mathbb{R}^N}|\nabla v_R|^p\text{d}x\Big)\\
&=\frac{1}{N}\big(t_R\mu_1^{-\frac{N-p}{p}}+s_R\mu_2^{-\frac{N-p}{p}}\big) S^{\frac{N}{p}},
\endaligned
$$
which implies that
$A\le\frac{1}{N}\big(\mu_1^{-\frac{N-p}{p}}+\mu_2^{-\frac{N-p}{p}}\big) S^{\frac{N}{p}}$.

For any $(u,v)\in\mathcal{N}$,
$$
\int_{\mathbb{R}^N}|\nabla
u|^p\text{d}x\le\mu_1\int_{\mathbb{R}^N}|u|^{p^\ast}\text{d}x\le
\mu_1S^{-\frac{p^\ast}{p}}\Big(\int_{\mathbb{R}^N}|\nabla u|^p\text{d}x\Big)^{\frac{p^\ast}{p}}.
$$
Therefore, $\int_{\mathbb{R}^N}|\nabla u|^p\text{d}x\ge\mu_1^{-\frac{N-p}{p}}S^{\frac{N}{p}}$.
Similarly, $\int_{\mathbb{R}^N}|\nabla
v|^p\text{d}x\ge\mu_2^{-\frac{N-p}{p}}S^{\frac{N}{p}}$. Then,
$A\ge\frac{1}{N}\big(\mu_1^{-\frac{N-p}{p}}+\mu_2^{-\frac{N-p}{p}}\big) S^{\frac{N}{p}}$. Hence,
\begin{equation}\label{28}
A=\frac{1}{N}\big(\mu_1^{-\frac{N-p}{p}}+\mu_2^{-\frac{N-p}{p}}\big) S^{\frac{N}{p}}.
\end{equation}

Suppose by contradiction that $A$ is attained by some
$(u,v)\in\mathcal{N}$. Then $(|u|,|v|)\in\mathcal{N}$ and
$I(|u|,|v|)=A$. By Lemma \ref{l3}, we see that $(|u|,|v|)$ is a
nontrivial solution of (\ref{6}). By strong maximum principle, we may assume that $u>0,v>0$, and so $\int_{\mathbb{R}^N}u^\alpha v^\beta\text{d}x>0$. Then,
$$
\int_{\mathbb{R}^N}|\nabla
u|^p\text{d}x<\mu_1\int_{\mathbb{R}^N}|u|^{p^\ast}\text{d}x\le
\mu_1S^{-\frac{p^\ast}{p}}\Big(\int_{\mathbb{R}^N}|\nabla u|^p\text{d}x\Big)^{\frac{p^\ast}{p}},
$$
which yields that $\int_{\mathbb{R}^N}|\nabla
u|^p\text{d}x>\mu_1^{-\frac{N-p}{p}}S^{\frac{N}{p}}$. Similarly, $\int_{\mathbb{R}^N}|\nabla
v|^p\text{d}x>\mu_2^{-\frac{N-p}{p}}S^{\frac{N}{p}}$. Therefore,
$$
A=I(u,v)=\frac{1}{N}\int_{\mathbb{R}^N}\big(|\nabla u|^p+|\nabla
v|^p\big)\text{d}x>\frac{1}{N}\big(\mu_1^{-\frac{N-p}{p}}+\mu_2^{-\frac{N-p}{p}}\big) S^{\frac{N}{p}},
$$
which contradicts to (\ref{28}). This completes the proof.
\hfill$\Box$

\section{Proof of Theorem \ref{th2}}

\begin{proposition}\label{p1}
Assume that $c,d\in\mathbb{R}$ satisfy
\begin{equation}\label{10}
\begin{split}
\begin{cases}\mu_1c^{\frac{p^\ast-p}{p}}+\frac{\alpha\gamma}{p^\ast} c^{\frac{\alpha-p}{p}}d^{\frac{\beta}{p}}\ge1,\\
\mu_2d^{\frac{p^\ast-p}{p}}+\frac{\beta\gamma}{p^\ast} c^{\frac{\alpha}{p}}d^{\frac{\beta-p}{p}}\ge1,\\
c>0,\ d>0.
\end{cases}
\end{split}
\end{equation}
If $\frac{N}{2}<p<N,\alpha,\beta>p$ and (\ref{11}) holds, then $c+d\ge k+l$, where $k,l\in\mathbb{R}$ satisfy (\ref{5}).
\end{proposition}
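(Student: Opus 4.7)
Set $F_1(c,d):=\mu_1c^{(p^\ast-p)/p}+\tfrac{\alpha\gamma}{p^\ast}c^{(\alpha-p)/p}d^{\beta/p}$ and $F_2(c,d):=\mu_2d^{(p^\ast-p)/p}+\tfrac{\beta\gamma}{p^\ast}c^{\alpha/p}d^{(\beta-p)/p}$, so that (\ref{10}) defines the closed region $\Sigma:=\{(c,d)\in(0,\infty)^2:F_1\ge 1,\ F_2\ge 1\}$ and (\ref{5}) is its ``double-equality'' subset $\Sigma_0:=\{F_1=F_2=1\}$. The conclusion $c+d\ge k+l$ amounts to showing that the infimum of $c+d$ on $\Sigma$ is attained at a point of $\Sigma_0$, and I would proceed in two steps.

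\textbf{Step 1 (reduction to one active constraint via scaling).} Using $\alpha+\beta=p^\ast$, a direct computation yields the joint homogeneity $F_i(tc,td)=t^{(p^\ast-p)/p}F_i(c,d)$ for all $t>0$, $i=1,2$. Hence, for any $(c,d)\in\Sigma$, the rescaled pair $(t_\ast c,t_\ast d)$ with $t_\ast:=\min\{F_1,F_2\}^{-p/(p^\ast-p)}\le 1$ again lies in $\Sigma$, activates at least one constraint, and satisfies $t_\ast(c+d)\le c+d$. Consequently the infimum of $c+d$ on $\Sigma$ equals its infimum on $\Sigma_1:=\Sigma\cap(\{F_1=1\}\cup\{F_2=1\})$. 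Under $(C_1)$ every exponent appearing in $F_1,F_2$ is strictly positive, so on $\Sigma$ one has $c\to 0\Rightarrow d\to\infty$ (from $F_1\ge 1$) and $d\to 0\Rightarrow c\to\infty$ (from $F_2\ge 1$); thus $c+d$ is coercive on $\Sigma$, and Weierstrass produces a minimizer $(c_0,d_0)\in\Sigma_1$.

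\textbf{Step 2 (both constraints must bind).} Assume for contradiction that $F_1(c_0,d_0)=1$ and $F_2(c_0,d_0)>1$. Near $(c_0,d_0)$ inside $\Sigma$ only $F_1=1$ is active, so $(c_0,d_0)$ is an interior minimizer of $c+d$ on the smooth curve $\{F_1=1\}$ and Lagrange gives $\partial_cF_1(c_0,d_0)=\partial_dF_1(c_0,d_0)$. This identity together with $F_1(c_0,d_0)=1$ determines $(c_0,d_0)$ explicitly in terms of $\mu_1,\alpha,\beta,\gamma$. Substituting the closed form into $F_2$, one obtains an expression which, under the first bound inside the $\min$ on the right-hand side of (\ref{11}), is bounded above by $1$---strictly, as soon as $\gamma$ is strictly below that bound---contradicting $F_2(c_0,d_0)>1$. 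The symmetric case $F_2(c_0,d_0)=1<F_1(c_0,d_0)$ is ruled out in the same way by the second bound in (\ref{11}). Therefore $(c_0,d_0)\in\Sigma_0$, it solves (\ref{5}), and for every $(c,d)\in\Sigma$ one has $c+d\ge c_0+d_0=k+l$.

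The main obstacle is the algebraic estimate that closes Step 2: the constant $\tfrac{3p^2}{(3-p)^2}$ together with the H\"older-type factors $((\alpha-p)/(\beta-p))^{(\beta-p)/p}$ and its twin in (\ref{11}) are calibrated exactly so that $F_2(c_0,d_0)<1$ at the one-sided stationary point on $\{F_1=1\}$. Verifying this sharpness requires a careful bookkeeping of exponents that uses both $\alpha,\beta>p$ and $\tfrac{N}{2}<p<N$; the remainder of the argument, including the scaling reduction and the Lagrange step, is conceptually routine.
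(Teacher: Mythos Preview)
Your scaling reduction in Step 1 is correct, but Step 2 misidentifies what condition (\ref{11}) actually controls, and the ``main obstacle'' you defer is not the right computation. The paper's route is more direct: set $x=c/d$, $y=c+d$; the constraints (\ref{10}) become $y^{(p^\ast-p)/p}\ge f_1(x)$ and $y^{(p^\ast-p)/p}\ge f_2(x)$ for the explicit functions
\[
f_1(x)=\frac{(x+1)^{(p^\ast-p)/p}}{\mu_1x^{(p^\ast-p)/p}+\tfrac{\alpha\gamma}{p^\ast}x^{(\alpha-p)/p}},\qquad
f_2(x)=\frac{(x+1)^{(p^\ast-p)/p}}{\mu_2+\tfrac{\beta\gamma}{p^\ast}x^{\alpha/p}},
\]
with equality for $(k,l)$. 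One computes $f_1',f_2'$ and isolates their signs in auxiliary functions $g_1,g_2$; condition (\ref{11}) is \emph{exactly} the statement $\max_{x>0}g_1\le 0$ and $\min_{x>0}g_2\ge 0$, so $f_1$ is strictly decreasing and $f_2$ strictly increasing on $(0,\infty)$. Then $y^{(p^\ast-p)/p}\ge\max\{f_1,f_2\}\ge\min_{x>0}\max\{f_1,f_2\}=y_0^{(p^\ast-p)/p}$, the minimum being attained at the unique crossing point. This also delivers the uniqueness recorded in Remark~\ref{r1}.

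Translated into your framework: along $\{F_1=1\}$ one has $c+d=f_1(x)^{p/(p^\ast-p)}$, so a Lagrange stationary point of $c+d$ on that curve is precisely a critical point of $f_1$. Condition (\ref{11}) does not arrange that $F_2\le 1$ at such a point---it arranges that \emph{no such point exists} (strict monotonicity of $f_1$). Your contradiction in Step~2 is therefore immediate, but not for the reason you give; if you actually attempt to solve $\partial_cF_1=\partial_dF_1$ together with $F_1=1$ and then bound $F_2$, you will find the system generically inconsistent under (\ref{11}), and the ``careful bookkeeping'' collapses into the monotonicity computation above rather than an evaluation of $F_2$. A secondary gap: even once repaired, your argument only gives $c+d\ge\min\{k+l:(k,l)\text{ solves }(\ref{5})\}$; the proposition asks for the inequality against \emph{every} solution, which requires the uniqueness that the paper's monotonicity yields for free but your optimization argument does not establish.
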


\begin{proof}
Let $y=c+d,x=\frac{c}{d},y_0=k+l$, and
$x_0=\frac{k}{l}$. By (\ref{10}) and (\ref{5}), we have that
$$
\aligned y^{\frac{p^\ast-p}{p}}&\ge\frac{(x+1)^\frac{p^\ast-p}{p}}{\mu_1x^{\frac{p^\ast-p}{p}} +\frac{\alpha\gamma} {p^\ast}x^{\frac{\alpha-p}{p}}}:=f_1(x),\qquad y^{\frac{p^\ast-p}{p}}_0=f_1(x_0),\\
y^{\frac{p^\ast-p}{p}}&\ge\frac{(x+1)^{\frac{p^\ast-p}{p}}}{\mu_2+\frac{\beta\gamma} {p^\ast}x^{\frac{\alpha}{p}}}:=f_2(x),\qquad \qquad \quad y^{\frac{p^\ast-p}{p}}_0=f_2(x_0).
\endaligned
$$
Thus,
$$
\aligned
f_1'(x)&=\frac{\alpha\gamma(x+1)^{\frac{p^\ast-2p}{p}}x^{\frac{\alpha-2p}{p}}} {pp^\ast(\mu_1x^{\frac{p^\ast-p}{p}}+\frac{\alpha\gamma} {p^\ast}x^{\frac{\alpha-p}{p}})^2}\Big[-\frac{p^\ast(p^\ast-p)\mu_1}{\alpha\gamma} x^{\frac{\beta}{p}}+\beta x-(\alpha-p)\Big],\\
f_2'(x)&=\frac{\beta\gamma(x+1)^{\frac{p^\ast-2p}{p}}}{pp^\ast(\mu_2 +\frac{\beta\gamma} {p^\ast}x^{\frac{\alpha}{p}})^2} \Big[(\beta-p)x^{\frac{\alpha}{p}}-\alpha x^{\frac{\alpha-p}{p}} +\frac{p^\ast(p^\ast-p)\mu_2}{\beta\gamma}\Big].
\endaligned
$$
Let $x_1=\big(\frac{p\alpha\gamma}{p^\ast(p^\ast-p)\mu_1}\big)^{\frac{p}{\beta-p}}$, $x_2=\frac{\alpha-p}{\beta-p}$ and
$$
\aligned
g_1(x)&=-\frac{p^\ast(p^\ast-p)\mu_1}{\alpha\gamma} x^{\frac{\beta}{p}}+\beta x-(\alpha-p),\\
g_2(x)&=(\beta-p)x^{\frac{\alpha}{p}}-\alpha x^{\frac{\alpha-p}{p}} +\frac{p^\ast(p^\ast-p)\mu_2}{\beta\gamma}.
\endaligned
$$
It follows from (\ref{11}) that
$$
\aligned
\max_{x\in(0,+\infty)}g_1(x)=g_1(x_1)=&(\beta-p)\Big(\frac{p\alpha\gamma} {p^\ast(p^\ast-p)\mu_1} \Big)^{\frac{p}{\beta-p}}-(\alpha-p)\le0,\\
\min_{x\in(0,+\infty)}g_2(x)=g_2(x_2)=&-p\Big(\frac{\alpha-p}{\beta-p}\Big) ^{\frac{\alpha-p}{p}}+\frac{p^\ast(p^\ast-p)\mu_2}{\beta\gamma}\ge0.
\endaligned
$$
That is, $f_1(x)$ is strictly decreasing in $(0, +\infty)$ and
$f_2(x)$ is strictly increasing in $(0,+\infty)$. Hence,
$$
\aligned
y^\frac{p^\ast-p}{p}\ge\max\{f_1(x),f_2(x)\}&\ge\min_{x\in(0,+\infty)} \Big(\max\{f_1(x),f_2(x)\}
\Big)\\
&=\min_{\{f_1=f_2\}}\Big(\max\{f_1(x),f_2(x)\}\Big)=y_0^\frac{p^\ast-p}{p},
\endaligned
$$
where $\{f_1=f_2\}:=\{x\in(0, +\infty):f_1(x)=f_2(x)\}$. This completes the proof.
\end{proof}

\begin{remark}\label{r1} From the proof of Proposition \ref{p1}, it is easy to see that
the system (\ref{5}), under the assumption of Proposition \ref{p1}, has only one real solution $(k,l)=(k_0,l_0)$, where $(k_0,l_0)$ is defined as in (\ref{15}).
\end{remark}

Define functions:
\begin{equation}\label{12}
\aligned
&F_1(k,l):=\mu_1k^{\frac{p^\ast-p}{p}}+\frac{\alpha\gamma}{p^\ast} k^{\frac{\alpha-p}{p}}l^{\frac{\beta}{p}}-1,\ \ k>0,l\ge0;\\
&F_2(k,l):=\mu_2l^{\frac{p^\ast-p}{p}}+\frac{\beta\gamma}{p^\ast} k^{\frac{\alpha}{p}}l^{\frac{\beta-p}{p}}-1,\ \ k\ge0,l>0;\\
&l(k):=\Big(\frac{p^\ast}{\alpha\gamma}\Big)^{\frac{p}{\beta}}k^{\frac{p-\alpha} {\beta}} \big(1-\mu_1k^{\frac{p^\ast-p}{p}}\big)^{\frac{p}{\beta}},\ \ 0<k\le\mu_1^{-\frac{p}{p^\ast-p}};\\
&k(l):=\Big(\frac{p^\ast}{\beta\gamma}\Big)^{\frac{p}{\alpha}}l^{\frac{p-\beta} {\alpha}} \big(1-\mu_2l^{\frac{p^\ast-p}{p}}\big)^{\frac{p}{\alpha}},\ \ 0<l\le\mu_2^{-\frac{p}{p^\ast-p}}.
\endaligned
\end{equation}
Then, $F_1\big(k,l(k)\big)\equiv0$ and $F_2\big(k(l),l\big)\equiv0$.

\begin{lemma}\label{l1}
Assume that $\frac{2N}{N+2}<p<\frac{N}{2},\alpha,\beta<p,\gamma>0$. Then
\begin{equation}\label{13}
\aligned
F_1(k,l)=0,\ \ F_2(k,l)=0,\ \ k,l>0
\endaligned
\end{equation}
has a solution $(k_0,l_0)$ such that
\begin{equation}\label{14}
\aligned
F_2(k,l(k))<0,\ \ \forall k\in(0,k_0),
\endaligned
\end{equation}
that is, $(k_0,l_0)$ satisfies (\ref{15}).
Similarly, (\ref{13}) has a solution $(k_1,l_1)$ such that
\begin{equation}\label{16}
\aligned
F_1(k(l),l)<0,\ \ \forall l\in(0,l_1),
\endaligned
\end{equation}
that is,
\begin{equation}\label{17}
\aligned
(k_1,l_1)\ \text{satisfies (\ref{5}) and}\  l_1=\min\{l:(k,l)\ \text{is a solution of (\ref{5})}\}.
\endaligned
\end{equation}
\end{lemma}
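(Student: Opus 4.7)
The plan is to parameterize the zero set of $F_1$ by the graph of $l(k)$ from (\ref{12}) on the open interval $(0, K_1)$, where $K_1 := \mu_1^{-p/(p^\ast-p)}$, and thereby reduce (\ref{13}) to finding a zero of the continuous scalar function $h(k) := F_2(k, l(k))$. Then $k_0$ will be taken as the infimum of the zero set of $h$, so that (\ref{14})--(\ref{15}) follow automatically; the second assertion (\ref{16})--(\ref{17}) is completely symmetric, with the roles of $k$ and $l$ swapped and $k(l)$ replacing $l(k)$.

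First I would verify that $l$ is continuous and strictly positive on $(0, K_1)$, vanishing at both endpoints: since $\alpha < p$, the exponent $(p-\alpha)/\beta$ is positive, so $l(k) \to 0$ as $k \to 0^+$; and $(1 - \mu_1 k^{(p^\ast-p)/p})^{p/\beta} \to 0$ as $k \to K_1^-$. Consequently $h$ is continuous on $(0, K_1)$.

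The key step is the boundary analysis of $h$. Writing $l(k) = C_1\, k^{(p-\alpha)/\beta}\, \omega(k)^{p/\beta}$ with $C_1 := (p^\ast/(\alpha\gamma))^{p/\beta}$ and $\omega(k) := 1 - \mu_1 k^{(p^\ast-p)/p}$, substitution into $F_2$ yields
\[
h(k) = \mu_2\, l(k)^{(p^\ast-p)/p} + \tfrac{\beta\gamma}{p^\ast}\, C_1^{(\beta-p)/p}\, k^{(p^\ast-p)/\beta}\, \omega(k)^{(\beta-p)/\beta} - 1,
\]
where the simplified exponent $(p^\ast-p)/\beta$ on $k$ comes from the identity $\alpha+\beta = p^\ast$ applied to $\alpha/p + (p-\alpha)(\beta-p)/(p\beta)$. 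Since $(p^\ast-p)/\beta > 0$ and $\omega(0^+) = 1$, both the first and second summands vanish as $k \to 0^+$, so $h(0^+) = -1$. At $k \to K_1^-$ the factor $\omega(k)^{(\beta-p)/\beta}$ blows up, because $\beta < p$ makes its exponent negative, while $k^{(p^\ast-p)/\beta}$ stays bounded away from $0$; hence $h(k) \to +\infty$.

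With $h(0^+) = -1$ and $h(K_1^-) = +\infty$, the intermediate value theorem guarantees that $Z := \{k \in (0, K_1) : h(k) = 0\}$ is nonempty. Define $k_0 := \inf Z$; by continuity $h(k_0) = 0$, so $(k_0, l_0) := (k_0, l(k_0))$ solves (\ref{13}). If $h$ took a nonnegative value at some $k < k_0$, then combined with $h(0^+) = -1$ the intermediate value theorem would produce a zero of $h$ strictly below $k_0$, contradicting the minimality of $k_0$; thus $h(k) < 0$ on $(0, k_0)$, which is (\ref{14}) and therefore yields (\ref{15}). The main technical obstacle is the exponent bookkeeping at $k = 0^+$: two powers of $k$ with opposite signs appear in the coupling term, and one must use $\alpha + \beta = p^\ast$ to collapse them to the single positive exponent $(p^\ast - p)/\beta$; once this is noted, the remainder is a routine continuity plus intermediate value argument, repeated verbatim for $\tilde h(l) := F_1(k(l), l)$ to produce $(k_1, l_1)$.
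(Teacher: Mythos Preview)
Your argument is correct and follows essentially the same route as the paper: parameterize $\{F_1=0\}$ by the graph $l=l(k)$, reduce to a scalar function on $(0,\mu_1^{-p/(p^\ast-p)})$, and apply the intermediate value theorem together with the boundary behaviour to locate the smallest zero. The only cosmetic difference is that the paper works with a rescaled function $f(k)$ (your $h(k)$ divided by the positive factor $C_1^{(\beta-p)/p} k^{(p^\ast-p)(p-\alpha)/(p\beta)}\omega(k)^{(\beta-p)/\beta}$), which shifts the endpoint limits to $f(0^+)=-\infty$ and $f(\mu_1^{-p/(p^\ast-p)})=\frac{\beta\gamma}{p^\ast}\mu_1^{-\alpha/\beta}>0$ instead of your $h(0^+)=-1$ and $h(K_1^-)=+\infty$; the sign pattern and hence the IVT conclusion are of course identical.
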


\begin{proof} We only prove the existence of $(k_0,l_0)$.
It follows from $F_1(k,l)=0,\ k,l>0$ that
$$
l=l(k),\ \ \forall k\in(0,\mu_1^{-\frac{p}{p^\ast-p}}).
$$
Substituting this into $F_2(k,l)=0$, we have
\begin{equation}\label{18}
\aligned
&\mu_2\Big(\frac{p^\ast}{\alpha\gamma}\Big)^{\frac{\alpha}{\beta}}
\Big(1-\mu_1k^{\frac{p^\ast-p}{p}}\Big)^{\frac{\alpha}{\beta}} +\frac{\beta\gamma}{p^\ast}k^{\frac{(p^\ast-p)\alpha}{p\beta}} \\ &-\Big(\frac{p^\ast}{\alpha\gamma}\Big)^{\frac{p-\beta}{\beta}} k^{-\frac{(p^\ast-p)(p-\alpha)}{p\beta}} \Big(1-\mu_1k^{\frac{p^\ast-p}{p}}\Big)^{\frac{p-\beta}{\beta}}=0.
\endaligned
\end{equation}
Setting
\begin{equation}\label{19}
\aligned
f(k):=&\mu_2\Big(\frac{p^\ast}{\alpha\gamma}\Big)^{\frac{\alpha}{\beta}}
\Big(1-\mu_1k^{\frac{p^\ast-p}{p}}\Big)^{\frac{\alpha}{\beta}} +\frac{\beta\gamma}{p^\ast}k^{\frac{(p^\ast-p)\alpha}{p\beta}} \\ &-\Big(\frac{p^\ast}{\alpha\gamma}\Big)^{\frac{p-\beta}{\beta}} k^{-\frac{(p^\ast-p)(p-\alpha)}{p\beta}} \Big(1-\mu_1k^{\frac{p^\ast-p}{p}}\Big)^{\frac{p-\beta}{\beta}},
\endaligned
\end{equation}
then the existence of a solution of (\ref{18}) in $(0,\mu_1^{-\frac{p}{p^\ast-p}})$ is equivalent to $f(k)=0$ possessing a solution in $(0,\mu_1^{-\frac{p}{p^\ast-p}})$.
Since $\alpha,\beta<p$, we get that
$$
\lim_{k\to0^+}f(k)=-\infty,\qquad f\big(\mu_1^{-\frac{p}{p^\ast-p}}\big)=\frac{\beta\gamma}{p^\ast} \mu_1^{-\frac{\alpha}{\beta}}>0,
$$
which implies that there exists $k_0\in\big(0,\mu_1^{-\frac{p}{p^\ast-p}}\big)$ such that $f(k_0)=0$ and $f(k)<0$ for $k\in(0,k_0)$. Let $l_0=l(k_0)$. Then $(k_0,l_0)$ is a solution of (\ref{13}) and (\ref{14}) holds.
\end{proof}

\begin{remark}
From $\frac{2N}{N+2}<p<\frac{N}{2}$ and $\alpha,\beta<p$, we get that $2<p^\ast<2p$. It can be seen from $\frac{N}{2}<p<N$ and $\alpha,\beta>p$ that $2<2p<p^\ast$.
\end{remark}

\begin{lemma}\label{l2}
Assume that $\frac{2N}{N+2}<p<\frac{N}{2},\alpha,\beta<p$, and (\ref{21}) holds.
Let $(k_0,l_0)$ be the same as in Lemma \ref{l1}. Then,
\begin{equation}\label{27}
(k_0+l_0)^{\frac{p^\ast-p}{p}}\max\{\mu_1,\mu_2\}<1
\end{equation}
and
\begin{equation}\label{20}
\aligned
F_2\big(k,l(k)\big)<0,\ \ \forall k\in(0,k_0);\ \ F_1\big(k(k),l\big)<0,\ \ \forall l\in(0,l_0).
\endaligned
\end{equation}
\end{lemma}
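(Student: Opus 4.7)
My plan is to extend the monotonicity analysis from Proposition \ref{p1} to the regime $\alpha,\beta<p$, noting that hypothesis (\ref{21}) is exactly what is needed to fix the signs of the derivatives of the auxiliary functions
$$
f_1(x)=\frac{(x+1)^{(p^\ast-p)/p}}{\mu_1 x^{(p^\ast-p)/p}+\frac{\alpha\gamma}{p^\ast}x^{(\alpha-p)/p}},\qquad f_2(x)=\frac{(x+1)^{(p^\ast-p)/p}}{\mu_2+\frac{\beta\gamma}{p^\ast}x^{\alpha/p}},
$$
so that $f_1$ becomes strictly increasing and $f_2$ strictly decreasing on $(0,+\infty)$. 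The derivative formulas already computed in Proposition \ref{p1} give $\mathrm{sgn}(f_i'(x))=\mathrm{sgn}(g_i(x))$ with the same $g_1,g_2$, so the analysis reduces to pinning down the signs of $g_1$ and $g_2$.

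First I would analyze $g_1$. Because $\beta<p$, the term $x^{\beta/p}$ is sublinear and $-(\alpha-p)=p-\alpha>0$, so $g_1$ is strictly convex and coercive on $(0,+\infty)$ and its unique critical point $\tilde x_1=\big(\frac{p^\ast(p^\ast-p)\mu_1}{\alpha\gamma p}\big)^{p/(p-\beta)}$ is a minimum. A one-line use of the critical-point identity reduces $g_1(\tilde x_1)$ to $(\beta-p)\tilde x_1+(p-\alpha)$, and $g_1(\tilde x_1)\ge 0$ translates, after invoking $\frac{Np^2}{(N-p)^2}=\frac{p^\ast(p^\ast-p)}{p}$, into exactly the first entry of the maximum in (\ref{21}). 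The mirror analysis of $g_2$ identifies its unique interior maximum at $x_2=(p-\alpha)/(p-\beta)$, and $g_2(x_2)\le 0$ matches the second entry of (\ref{21}). Thus (\ref{21}) forces $g_1\ge 0$ and $g_2\le 0$ throughout $(0,+\infty)$.

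The monotonicity payoff is then immediate: $f_1$ is strictly increasing from $f_1(0^+)=0$ to $\lim_{x\to+\infty}f_1(x)=\mu_1^{-1}$, while $f_2$ is strictly decreasing from $f_2(0^+)=\mu_2^{-1}$ to $0$. Hence $f_1-f_2$ has a unique zero $x_\ast\in(0,+\infty)$, so (\ref{5}) has a unique positive solution and in particular the two distinguished pairs $(k_0,l_0)$ and $(k_1,l_1)$ of Lemma \ref{l1} coincide. Writing $y_0=k_0+l_0$ and using $y_0^{(p^\ast-p)/p}=f_1(x_\ast)=f_2(x_\ast)$, strict monotonicity with these limits gives $y_0^{(p^\ast-p)/p}<\min\{\mu_1^{-1},\mu_2^{-1}\}$, which is (\ref{27}).

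Finally, (\ref{20}) is essentially automatic: $F_2(k,l(k))<0$ for $k\in(0,k_0)$ is (\ref{14}), and the symmetric inequality $F_1(k(l),l)<0$ for $l\in(0,l_0)$ is (\ref{16}) applied with $l_1=l_0$, an identification that was just supplied by the uniqueness above. The main obstacle is the algebraic bookkeeping in the first step: the thresholds arising from $g_1(\tilde x_1)\ge 0$ and $g_2(x_2)\le 0$ must be shown to match the two entries of the maximum in (\ref{21}) \emph{exactly}, so that (\ref{21}) is sharp enough to force both sign conditions simultaneously; once those sign conditions are in hand, everything else is a soft monotonicity argument.
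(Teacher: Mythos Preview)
Your argument is correct and takes a genuinely different route from the paper. The paper never returns to the ratio--sum variables $(x,y)=(k/l,\,k+l)$ of Proposition~\ref{p1}; instead it works directly with the implicit curve $l=l(k)$ from (\ref{12}), computes $l'(k)$ and $l''(k)$ explicitly, locates the minimizer $\bar k$ of $l'$, and uses (\ref{21}) to show $l'(\bar k)\ge -1$. This yields that $k\mapsto k+l(k)$ is strictly increasing on $[0,\mu_1^{-p/(p^\ast-p)}]$, whence $k_0+l_0<\mu_1^{-p/(p^\ast-p)}$ because $l(\mu_1^{-p/(p^\ast-p)})=0$; the $\mu_2$-bound is symmetric, and the identification $(k_0,l_0)=(k_1,l_1)$ is then obtained by a short swap argument using the monotonicity of both $k+l(k)$ and $l+k(l)$. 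Your approach is more economical: it recycles the derivative formulas from Proposition~\ref{p1}, and the single observation that $f_1$ strictly increases from $0$ to $\mu_1^{-1}$ while $f_2$ strictly decreases from $\mu_2^{-1}$ to $0$ delivers uniqueness of the solution to (\ref{5}), the bound (\ref{27}), and (\ref{20}) in one stroke. The paper's approach does have the side benefit that the monotonicity of $k+l(k)$ is reused verbatim in the proof of Proposition~\ref{p2}; your framework handles that proposition too (any $(\tilde k,\tilde l)$ with $F_1,F_2\ge 0$ satisfies $\tilde y^{(p^\ast-p)/p}\ge\max\{f_1(\tilde x),f_2(\tilde x)\}\ge y_0^{(p^\ast-p)/p}$), but via a different mechanism.
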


\begin{proof}
Recalling (\ref{12}), we obtain that
$$
\aligned
l'(k)=&\Big(\frac{p^\ast}{\alpha\gamma}\Big)^{\frac{p}{\beta}}\frac{p}{\beta} \Big(k^{\frac{p-\alpha}{p}}-\mu_1k^{\frac{\beta}{p}}\Big)^{\frac{p-\beta}{\beta}} \Big(\frac{p-\alpha}{p}k^{-\frac{\alpha}{p}}-\frac{\mu_1\beta}{p} k^{\frac{\beta-p}{p}}\Big)\\
=&\Big(\frac{p^\ast\mu_1}{\alpha\gamma}\Big)^{\frac{p}{\beta}} k^{\frac{p-p^\ast}{\beta}} \Big(\mu_1^{-1}-k^{\frac{p^\ast-p}{p}}\Big)^{\frac{p-\beta}{\beta}} \Big(\frac{p-\alpha}{\mu_1\beta}-k^{\frac{p^\ast-p}{p}}\Big),
\endaligned
$$
$l'\big((\frac{p-\alpha}{\mu_1\beta})^{\frac{p}{p^\ast-p}}\big)= l'\big(\mu_1^{-\frac{p}{p^\ast-p}}\big)=0$, $l'(k)>0$ for $k\in\big(0,(\frac{p-\alpha}{\mu_1\beta})^{\frac{p}{p^\ast-p}}\big)$, and $l'(k)<0$ for
$k\in\big((\frac{p-\alpha}{\mu_1\beta})^{\frac{p}{p^\ast-p}}, \mu_1^{-\frac{p}{p^\ast-p}}\big)$.
From
$$
\aligned
l''(\bar{k})=&\frac{p-\beta}{\beta} \Big(\frac{p^\ast\mu_1}{\alpha\gamma}\Big)^{\frac{p}{\beta}} \bar{k}^{\frac{p-2\beta-\alpha}{\beta}} \Big(\mu_1^{-1}-\bar{k}^{\frac{p^\ast-p}{p}}\Big)^{\frac{p-2\beta}{\beta}}\\
&\cdot\Big[\Big(\frac{p-\alpha}{\mu_1\beta}-\bar{k}^{\frac{p^\ast-p}{p}}\Big)^2 -\Big(\mu_1^{-1}-\bar{k}^{\frac{p^\ast-p}{p}}\Big) \Big(\frac{\alpha(p-\alpha)}{\mu_1\beta(p-\beta)}-\bar{k}^{\frac{p^\ast-p}{p}}\Big)\Big]=0
\endaligned
$$
and $\bar{k}\in\big((\frac{p-\alpha}{\mu_1\beta})^{\frac{p}{p^\ast-p}},  \mu_1^{-\frac{p}{p^\ast-p}}\big)$, we have $\bar{k}=\big(\frac{p(p-\alpha)} {(2p-p^\ast)\mu_1\beta}\big)^{\frac{p}{p^\ast-p}}$. Then, by (\ref{21}), we get that
$$
\aligned
\min_{k\in\big(0,\mu_1^{-\frac{p}{p^\ast-p}}\big]}l'(k)= &\min_{k\in\big((\frac{p-\alpha}{\mu_1\beta})^{\frac{p}{p^\ast-p}}, \mu_1^{-\frac{p}{p^\ast-p}}\big]}l'(k)=l'(\bar{k})\\
=&-\Big(\frac{p^\ast(p^\ast-p)\mu_1}{p\alpha\gamma}\Big)^{\frac{p}{\beta}} \Big(\frac{p-\beta}{p-\alpha}\Big)^{\frac{p-\beta}{\beta}}\\
\ge&-1.
\endaligned
$$
Therefore, $l'(k)>-1$ for $k\in\big(0,\mu_1^{-\frac{p}{p^\ast-p}}\big]$ with $k\neq\big(\frac{p(p-\alpha)} {(2p-p^\ast)\mu_1\beta}\big)^{\frac{p}{p^\ast-p}}$, which implies that $l(k)+k$ is strictly increasing on $\big[0,\mu_1^{-\frac{p}{p^\ast-p}}\big]$. Noticing that $k_0<\mu_1^{-\frac{p}{p^\ast-p}}$, we have
$$
\mu_1^{-\frac{p}{p^\ast-p}}=l\big(\mu_1^{-\frac{p}{p^\ast-p}}\big) +\mu_1^{-\frac{p}{p^\ast-p}} >l(k_0)+k_0=l_0+k_0,
$$
that is, $\mu_1(k_0+l_0)^{\frac{p^\ast-p}{p}}<1$. Similarly, $\mu_2(k_0+l_0)^{\frac{p^\ast-p}{p}}<1$. To prove (\ref{20}), by Lemma \ref{l1}, it suffices to show that $(k_0,l_0)=(k_1,l_1)$. It follows from (\ref{14}) and (\ref{16}) that $k_1\ge k_0$ and $l_0\ge l_1$. Suppose by contradiction that $k_1>k_0$. Then $l(k_1)+k_1>l(k_0)+k_0$. Hence, $l_1+k(l_1)=l(k_1)+k_1 >l(k_0)+k_0=l_0+k(l_0)$. Following the arguments in the beginning of the proof, we have $l+k(l)$ is strictly increasing for $l\in\big[0,\mu_2^{-\frac{p}{p^\ast-p}}\big]$. Therefore, $l_1>l_0$, which contradicts to $l_0\ge l_1$. Then, $k_1=k_0$, and similarly, $l_0=l_1$. This completes the proof.
\end{proof}

\begin{remark} For any $\gamma>0$, the condition (\ref{21}) always holds for the dimension $N$ large enough.
\end{remark}

\begin{proposition}\label{p2}
Assume that $\frac{2N}{N+2}<p<\frac{N}{2},\alpha,\beta<p$, and (\ref{21}) holds. Then
\begin{equation}\label{22}
\aligned
\begin{cases}
k+l\le k_0+l_0,\\
F_1(k,l)\ge0,\ \ F_2(k,l)\ge0,\\
k,l\ge0,\ \ (k,l)\neq(0,0)
\end{cases}
\endaligned
\end{equation}
has an unique solution $(k,l)=(k_0,l_0)$.
\end{proposition}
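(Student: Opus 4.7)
Existence is immediate: by Lemma \ref{l1}, $(k_0, l_0)$ satisfies (\ref{5}), hence $F_1(k_0, l_0) = F_2(k_0, l_0) = 0$, which trivially gives (\ref{22}). The content is uniqueness, for which I mimic the change of variables used in the proof of Proposition \ref{p1}.

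The plan has two steps. First, rule out the boundary cases $k = 0$ or $l = 0$: if $l = 0$ with $k > 0$, then $F_1(k,0) = \mu_1 k^{(p^\ast - p)/p} - 1 \ge 0$ forces $k \ge \mu_1^{-p/(p^\ast - p)}$, contradicting $k \le k_0 + l_0 < \mu_1^{-p/(p^\ast - p)}$ supplied by (\ref{27}); the case $k = 0$ is analogous via the $\mu_2$-bound in (\ref{27}). Thus $k, l > 0$. Next, introduce $x = k/l$ and $y = k + l$; the identity $\alpha + \beta = p^\ast$ converts the conditions $F_1(k,l) \ge 0$ and $F_2(k,l) \ge 0$ into $y^{(p^\ast - p)/p} \ge f_1(x)$ and $y^{(p^\ast - p)/p} \ge f_2(x)$, with $f_1, f_2$ exactly the functions appearing in Proposition \ref{p1}, and with $y_0 := k_0 + l_0$, $x_0 := k_0/l_0$ satisfying $y_0^{(p^\ast - p)/p} = f_1(x_0) = f_2(x_0)$.

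The key claim is that, in the present regime ($\alpha, \beta < p$, $\gamma > 0$) and under (\ref{21}), $f_1$ is strictly \emph{increasing} and $f_2$ is strictly \emph{decreasing} on $(0, \infty)$ --- the opposite of Proposition \ref{p1}. The auxiliary polynomials $g_1, g_2$ controlling the signs of $f_1', f_2'$ are the same, but $\alpha, \beta < p$ now forces $g_1$ to have a unique \emph{minimum} at some $x_* > 0$ and $g_2$ to have a unique \emph{maximum} at $x_{**} = (p-\alpha)/(p-\beta)$; a direct computation of $g_1(x_*)$ and $g_2(x_{**})$ shows that (\ref{21}) is precisely the statement $\min g_1 \ge 0$ and $\max g_2 \le 0$. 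Strict monotonicity then follows because each critical point is isolated, so $\max\{f_1(x), f_2(x)\} \ge y_0^{(p^\ast - p)/p}$ for all $x > 0$, with equality only at $x = x_0$. Combined with $y^{(p^\ast - p)/p} \ge \max\{f_1(x), f_2(x)\}$ and the hypothesis $y \le y_0$, this forces $x = x_0$ and $y = y_0$, i.e. $(k, l) = (k_0, l_0)$.

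The main obstacle is this monotonicity step. The algebra parallels Proposition \ref{p1}, but the roles of min and max are swapped because $\alpha, \beta$ now lie below $p$ rather than above; in particular this explains why (\ref{21}) is a \emph{lower} bound on $\gamma$ whereas (\ref{11}) was an \emph{upper} bound, and why the critical points $x_*$ and $x_{**}$ play opposite roles. Getting the precise form of (\ref{21}) to fall out cleanly requires identifying the correct extremizers of $g_1, g_2$ and signing $g_1(x_*), g_2(x_{**})$ without error.
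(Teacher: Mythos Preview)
Your argument is correct, and it takes a genuinely different route from the paper. The paper stays in $(k,l)$ coordinates and leans on Lemma~\ref{l2}: it uses the monotonicity of $l(k)+k$ and $k(l)+l$, together with \eqref{20}, to run a case analysis on the curves $F_1=0$ and $F_2=0$, deriving a contradiction from the assumption $\tilde k<k_0$. Your approach instead passes to $(x,y)=(k/l,\,k+l)$ exactly as in Proposition~\ref{p1} and shows directly that, in the regime $\alpha,\beta<p$, condition \eqref{21} forces $\min g_1\ge 0$ and $\max g_2\le 0$, hence $f_1$ is strictly increasing and $f_2$ strictly decreasing; the min--max argument then pins down $(x,y)=(x_0,y_0)$. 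This is cleaner and more symmetric: it makes the duality between \eqref{11} (upper bound on $\gamma$, $f_1$ decreasing, $f_2$ increasing) and \eqref{21} (lower bound on $\gamma$, roles reversed) completely transparent, and as a byproduct it shows that \eqref{5} has a \emph{unique} solution under \eqref{21}, something the paper does not extract. Note also that your monotonicity of $f_1,f_2$ already yields \eqref{27} for free, since $y_0^{(p^\ast-p)/p}=f_1(x_0)<\lim_{x\to\infty}f_1(x)=\mu_1^{-1}$ and similarly for $\mu_2$; so in fact your argument can be made independent of Lemma~\ref{l2} altogether, needing only the existence of $(k_0,l_0)$ from Lemma~\ref{l1} (which itself follows from the crossing of $f_1$ and $f_2$). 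The paper's route, by contrast, requires the full strength of Lemma~\ref{l2}, whose proof involves a separate second-derivative computation for $l(k)$.
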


\begin{proof}
Obviously, $(k_0,l_0)$ satisfies (\ref{22}). Suppose that $(\tilde{k},\tilde{l})$ is any solution of (\ref{22}), and without loss of generality, assume that $\tilde{k}>0$. We claim that $\tilde{l}>0$. In fact, if $\tilde{l}=0$, then $\tilde{k}\le k_0+l_0$ and $F_1(\tilde{k},0)=\mu_1\tilde{k}^{\frac{p^\ast-p}{p}}-1\ge0$. Thus,
$$
1\le\mu_1\tilde{k}^{\frac{p^\ast-p}{p}}\le\mu_1(k_0+l_0)^{\frac{p^\ast-p}{p}},
$$
a contradiction with Lemma \ref{l2}.

Suppose by contradiction that $\tilde{k}<k_0$. It can be seen that $k(l)$ is strictly increasing on $\big(0,(\frac{p-\beta}{\mu_2\alpha})^{\frac{p}{p^\ast-p}}\big]$ and strictly decreasing on
$\big[(\frac{2-\beta}{\mu_2\alpha})^{\frac{p}{p^\ast-p}},\mu_2^{-\frac{p}{p^\ast-p}} \big]$, and $k(0)=k\big(\mu_2^{-\frac{p}{p^\ast-p}}\big)=0$. Since $0<\tilde{k}<k_0=k(l_0)$, there exist $0<l_1<l_2<\mu_2^{-\frac{p}{p^\ast-p}}$ such that $k(l_1)=k(l_2)=\tilde{k}$ and
\begin{equation}\label{23}
\aligned
F_2(\tilde{k},l)<0\Longleftrightarrow\tilde{k}<k(l)\Longleftrightarrow l_1<l<l_2.
\endaligned
\end{equation}
It follows from $F_1(\tilde{k},\tilde{l})\ge0$ and $F_2(\tilde{k},\tilde{l})\ge0$ that $\tilde{l}\ge l(\tilde{k})$ and $\tilde{l}\le l_1$ or $\tilde{l}\ge l_2$. By (\ref{20}), we see $F_2\big(\tilde{k},l(\tilde{k})\big)<0$. By (\ref{23}), we get that $l_1<l(\tilde{k})<l_2$. Therefore, $\tilde{l}\ge l_2$.

On the other hand, set $l_3:=k_0+l_0-\tilde{k}$. Then, $l_3>l_0$ and moreover,
$$
k(l_3)+k_0+l_0-\tilde{k}=k(l_3)+l_3>k(l_0)+l_0=k_0+l_0,
$$
that is, $k(l_3)>\tilde{k}$. By (\ref{23}), we have $l_1<l_3<l_2$. Since $\tilde{k}+\tilde{l}\le k_0+l_0$, we obtain that $\tilde{l}\le k_0+l_0-\tilde{k}=l_3<l_2$. This contradicts to $\tilde{l}\ge l_2$, which completes the proof.
\end{proof}

\vskip0.1in
\noindent{\bf Proof of Theorem \ref{th2}.}
Recalling (\ref{8}) and (\ref{5}), we see that
$(\sqrt[p]{k_0}U_{\varepsilon,y},\sqrt[p]{l_0}U_{\varepsilon,y})\in\mathcal{N}$
is a nontrivial solution of (\ref{6}), and
\begin{equation}\label{24}
A\le
I(\sqrt[p]{k_0}U_{\varepsilon,y},\sqrt[p]{l_0}U_{\varepsilon,y})=\frac{1}{N}(k_0+l_0) S^{\frac{N}{p}}.
\end{equation}

Let $\{(u_n,v_n)\}\subset\mathcal{N}$ be a minimizing sequence for
$A$, i.e., $I(u_n,v_n)\to A$, as $n\to \infty$. Define
$c_n=(\int_{\mathbb{R}^N}|u_n|^{p^\ast}\text{d}x)^{\frac{p}{p^\ast}}$ and
$d_n=(\int_{\mathbb{R}^N}|v_n|^{p^\ast}\text{d}x)^{\frac{p}{p^\ast}}$. Then,
\begin{equation}\label{25}
\aligned Sc_n&\le\int_{\mathbb{R}^N}|\nabla u_n|^p\text{d}x
=\int_{\mathbb{R}^N}\big(\mu_1|u_n|^{p^\ast}+\frac{\alpha\gamma}{p^\ast} |u_n|^\alpha|v_n|^\beta\big)\text{d}x\\
&\le\mu_1c_n^{\frac{p^\ast}{p}}+\frac{\alpha\gamma}{p^\ast}c_n^{\frac{\alpha}{p}} d_n^{\frac{\beta}{p}},\\
Sd_n&\le\int_{\mathbb{R}^N}|\nabla v_n|^p\text{d}x
=\int_{\mathbb{R}^N}\big(\mu_2|v_n|^{p^\ast}+\frac{\beta\gamma}{p^\ast} |u_n|^\alpha|v_n|^\beta\big)\text{d}x\\
&\le\mu_2d_n^{\frac{p^\ast}{p}}+\frac{\beta\gamma}{p^\ast}c_n^{\frac{\alpha}{p}} d_n^{\frac{\beta}{p}}.
\endaligned
\end{equation}
Dividing both sides of the inequalities by $Sc_n$ and
$Sd_n$, respectively, and denoting
$\tilde{c}_n=\frac{c_n}{S^{\frac{p}{p^\ast-p}}},\tilde{d}_n=\frac{d_n}{S ^{\frac{p}{p^\ast-p}}}$, we deduce that
$$
\aligned
\begin{cases}\mu_1\tilde{c}_n^{\frac{p^\ast-p}{p}}+\frac{\alpha\gamma}{p^\ast} \tilde{c}_n^{\frac{\alpha-p}{p}}\tilde{d}_n^{\frac{\beta}{p}}\ge1,\\
\mu_2\tilde{d}_n^{\frac{p^\ast-p}{p}}+\frac{\beta\gamma}{p^\ast} \tilde{c}_n^{\frac{\alpha}{p}}\tilde{d}_n^{\frac{\beta-p}{p}}\ge1,
\end{cases}
\endaligned
$$
that is, $F_1(\tilde{c}_n,\tilde{d}_n)\ge0$ and $F_2(\tilde{c}_n,\tilde{d}_n)\ge0$.
Then, for $\frac{N}{2}<p<N$, $\alpha,\beta>p$, Proposition \ref{p1} and Remark \ref{r1} ensure that $\tilde{c}_n+\tilde{d}_n\ge k+l=k_0+l_0$; for $\frac{2N}{N+2}<p<\frac{N}{2}$, $\alpha,\beta<p$, Proposition \ref{p2} guarantees that $\tilde{c}_n+\tilde{d}_n\ge k_0+l_0$. Therefore,
\begin{equation}\label{26}
c_n+d_n\ge(k_0+l_0)S^\frac{p}{p^\ast-p}=(k_0+l_0)S^{\frac{N-p}{p}}.
\end{equation}
Noticing that $I(u_n,v_n)=\frac{1}{N}\int_{\mathbb{R}^N}\big(|\nabla
u_n|^p+|\nabla v_n|^p\big)$, by
(\ref{24}) and (\ref{25}), we have
$$
S(c_n+d_n)\le NI(u_n,v_n)=NA+o(1)\le (k_0+l_0)S^\frac{N}{p}+o(1).
$$
Combining this with (\ref{26}), we get that $c_n+d_n\to
(k_0+l_0)S^{\frac{N-p}{p}}$ as $n\to\infty$. Thus,
$$
A=\lim_{n\to \infty}I(u_n,v_n)\ge\lim_{n\to \infty}\frac{1}{N}
S(c_n+d_n)=\frac{1}{N}(k_0+l_0)S^\frac{N}{p}.
$$
Hence,
\begin{equation}\label{29}
A=\frac{1}{N}(k_0+l_0)S^{\frac{N}{p}}=I(\sqrt[p]{k_0}U_{\varepsilon,y}, \sqrt[p]{l_0}U_{\varepsilon,y}).
\end{equation}
\hfill$\Box$

\section{Proof of Theorems \ref{th3} and \ref{th4}}

For $(H_1)$ holding and $\gamma>0$, define
\begin{equation}
\aligned
A':=\inf_{(u,v)\in\mathcal{N}'}I(u,v),
\endaligned
\end{equation}
where
\begin{equation}\label{83}
\aligned
\mathcal{N}':=\Big\{(u,v&)\in D\setminus\{(0,0)\}:
\int_{\mathbb{R}^N}\big(|\nabla u|^p+|\nabla v|^p\big)\\
&=\int_{\mathbb{R}^N}\big(\mu_1|u|^{p^\ast}+\mu_2|v|^{p^\ast} +\gamma|u|^\alpha|v|^\beta\big)\Big\}.
\endaligned
\end{equation}
It follows from $\mathcal{N}\subset\mathcal{N}'$ that $A'\le A$. By Sobolev inequality, we see that $A'>0$. Consider
\begin{equation}
\aligned
\begin{cases}
-\Delta_p u=\mu_1|u|^{p^\ast-2}u+\frac{\alpha\gamma}{p^\ast}|u|^{\alpha-2}u |v|^\beta,\ \ x\in B(0,R),\\
-\Delta_p v=\mu_2|v|^{p^\ast-2}v+\frac{\beta\gamma}{p^\ast}|u|^{\alpha} |v|^{\beta-2}v,\ \ x\in B(0,R),\\
u,v\in H_0^1\big(B(0,R)\big),
\end{cases}
\endaligned
\end{equation}
where $B(0,R):=\{x\in\mathbb{R}^N:|x|<R\}$. Define
\begin{equation}\label{33}
\aligned
\mathcal{N}'(R):=\Big\{(u,v)\in H&(0,R)\setminus\{(0,0)\}:
\int_{B(0,R)}\big(|\nabla u|^p+|\nabla v|^p\big)\\&=\int_{B(0,R)}\big(\mu_1|u|^{p^\ast}+\mu_2|v|^{p^\ast} +\gamma|u|^\alpha|v|^\beta\big)\Big\}
\endaligned
\end{equation}
and
\begin{equation}
\aligned
A'(R):=\inf_{(u,v)\in \mathcal{N}'(R)}I(u,v),
\endaligned
\end{equation}
where $H(0,R):=H_0^1\big(B(0,R)\big)\times H_0^1\big(B(0,R)\big)$. For $\varepsilon\in[0,\min\{\alpha,\beta\}-1)$, consider
\begin{equation}
\aligned
\begin{cases}\label{31}
-\Delta_p u=\mu_1|u|^{p^\ast-2-2\varepsilon}u+\frac{(\alpha-\varepsilon)\gamma} {p^\ast-2\varepsilon} |u|^{\alpha-2-\varepsilon}u |v|^{\beta-\varepsilon},\ \ x\in B(0,1),\\
-\Delta_p v=\mu_2|v|^{p^\ast-2-2\varepsilon}v+\frac{(\beta-\varepsilon)\gamma} {p^\ast-2\varepsilon} |u|^{\alpha-\varepsilon} |v|^{\beta-2-\varepsilon}v,\ \ x\in B(0,1),\\
u,v\in H_0^1\big(B(0,1)\big).
\end{cases}
\endaligned
\end{equation}
Define
\begin{equation}\label{34}
\aligned
I_{\varepsilon}(u,v):=&\frac{1}{p}\int_{B(0,1)}\big(|\nabla u|^p+|\nabla v|^p\big)\\
&-\frac{1}{p^\ast-2\varepsilon}\int_{B(0,1)}\big(\mu_1|u|^{p^\ast-2\varepsilon} +\mu_2|v|^{p^\ast-2\varepsilon} +\gamma|u|^{\alpha-\varepsilon} |v|^{\beta-\varepsilon}\big),\\
\mathcal{N}_{\varepsilon}':=&\Big\{(u,v)\in H(0,1)\setminus\{(0,0)\}:G_\varepsilon (u,v):=\int_{B(0,1)}\big(|\nabla u|^p+|\nabla v|^p\big)\\
& \quad\qquad -\int_{B(0,1)}\big(\mu_1|u|^{p^\ast-2\varepsilon} +\mu_2|v|^{p^\ast-2\varepsilon} +\gamma|u|^{\alpha-\varepsilon} |v|^{\beta-\varepsilon}\big)=0\Big\},
\endaligned
\end{equation}
and
\begin{equation}
\aligned
A_{\varepsilon}:=\inf_{(u,v)\in\mathcal{N}_\varepsilon'}I_\varepsilon(u,v).
\endaligned
\end{equation}

\begin{lemma}\label{l4} Assume that $\frac{2N}{N+2}<p<\frac{N}{2},\alpha,\beta<p$. For $\varepsilon\in(0,\min\{\alpha,\beta\}-1)$, there holds
$$
A_\varepsilon<\min\Big\{\inf_{(u,0)\in\mathcal{N}_\varepsilon'}I_\varepsilon(u,0), \inf_{(0,v)\in\mathcal{N}_\varepsilon'}I_\varepsilon(0,v)\Big\}.
$$
\end{lemma}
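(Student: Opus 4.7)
The strategy is to start from a ground state $u_0$ of the semi-trivial problem and show that perturbing it by a small positive $v$-component, then projecting back onto $\mathcal{N}_\varepsilon'$, strictly decreases the energy. First I would observe that since $\varepsilon>0$ makes the exponent $p^\ast-2\varepsilon$ strictly subcritical, the embedding $H_0^1(B(0,1))\hookrightarrow L^{p^\ast-2\varepsilon}(B(0,1))$ is compact, so the semi-trivial infimum $m_1:=\inf_{(u,0)\in\mathcal{N}_\varepsilon'}I_\varepsilon(u,0)$ is attained at some $u_0>0$ with $A:=\int|\nabla u_0|^p=\mu_1\int|u_0|^{p^\ast-2\varepsilon}$.

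Next, fix any positive $\psi\in H_0^1(B(0,1))$, and for small $t>0$ choose $\tau(t)>0$ so that $\big(\tau(t)u_0,\,\tau(t)t\psi\big)\in\mathcal{N}_\varepsilon'$. Because $(\alpha-\varepsilon)+(\beta-\varepsilon)=p^\ast-2\varepsilon$, the uniform scaling makes the coupling term homogeneous of degree $p^\ast-2\varepsilon$, and the Nehari identity $G_\varepsilon(\tau u,\tau v)=0$ reduces to
\[
\tau(t)^{p^\ast-2\varepsilon-p}=\frac{A+t^p\int|\nabla\psi|^p}{A+\mu_2 t^{p^\ast-2\varepsilon}\int|\psi|^{p^\ast-2\varepsilon}+\gamma t^{\beta-\varepsilon}\int|u_0|^{\alpha-\varepsilon}|\psi|^{\beta-\varepsilon}}.
\]
The decisive inequality is $\beta-\varepsilon<\beta<p<p^\ast-2\varepsilon$, so among the three powers $t^p$, $t^{\beta-\varepsilon}$, $t^{p^\ast-2\varepsilon}$ appearing above the coupling power $t^{\beta-\varepsilon}$ is the smallest; since $\gamma>0$ it appears in the denominator with a positive coefficient. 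Expanding for $t\to 0^+$ therefore gives
\[
\tau(t)^{p^\ast-2\varepsilon-p}=1-\frac{\gamma}{A}\Big(\int|u_0|^{\alpha-\varepsilon}|\psi|^{\beta-\varepsilon}\Big)t^{\beta-\varepsilon}+o(t^{\beta-\varepsilon}),
\]
and hence $\tau(t)^p=1-C_0\,t^{\beta-\varepsilon}+o(t^{\beta-\varepsilon})$ with $C_0>0$.

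Combining this with the standard Nehari reduction $I_\varepsilon|_{\mathcal{N}_\varepsilon'}=\big(\tfrac{1}{p}-\tfrac{1}{p^\ast-2\varepsilon}\big)\int(|\nabla u|^p+|\nabla v|^p)$ and the bound $t^p=o(t^{\beta-\varepsilon})$, I obtain
\[
I_\varepsilon\big(\tau(t)u_0,\tau(t)t\psi\big)=\Big(\tfrac{1}{p}-\tfrac{1}{p^\ast-2\varepsilon}\Big)\tau(t)^p\Big(A+t^p\int|\nabla\psi|^p\Big)=I_\varepsilon(u_0,0)-C_1 t^{\beta-\varepsilon}+o(t^{\beta-\varepsilon})
\]
with $C_1>0$. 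Thus $A_\varepsilon\le I_\varepsilon\big(\tau(t)u_0,\tau(t)t\psi\big)<m_1$ for all sufficiently small $t>0$, and interchanging the roles of $u$ and $v$ yields the same strict inequality against $m_2:=\inf_{(0,v)\in\mathcal{N}_\varepsilon'}I_\varepsilon(0,v)$.

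The only real difficulty is keeping careful track of the asymptotic orders in the expansion above. Three hypotheses enter in precisely this step and should be invoked explicitly: $\gamma>0$ makes the leading correction negative; $\beta<p$ together with $\varepsilon>0$ ensures $\beta-\varepsilon<p$, so that the coupling power strictly dominates the gradient power $t^p$; and $\alpha+\beta=p^\ast$ makes the coupling term homogeneous of degree $p^\ast-2\varepsilon$ under uniform scaling, which is what permits the clean single-parameter Nehari projection used throughout.
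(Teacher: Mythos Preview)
Your proposal is correct and follows essentially the same route as the paper: both start from a minimizer of the semitrivial problem, perturb in the second component, project back onto $\mathcal{N}_\varepsilon'$ by a uniform scaling, and expand to see that the coupling term, of order $t^{\beta-\varepsilon}$ with $\beta-\varepsilon<p$, drives the energy strictly below the semitrivial level. The only cosmetic difference is that the paper takes the perturbation to be the minimizer $u_2$ of the second scalar problem rather than an arbitrary positive $\psi$, and it derives the expansion of the scaling factor via $t'(s)$ instead of directly; neither choice affects the argument.
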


\begin{proof} From $\min\{\alpha,\beta\}\le \frac{p^\ast}{2}$, it is easy to see that $2<p^\ast-2\varepsilon<p^\ast$. Then, we may assume that $u_i$ is a least energy solution of
$$
-\Delta_p u=\mu_i|u|^{p^\ast-2-2\varepsilon}u,\ \ u\in H_0^1\big(B(0,1)\big),\ \ i=1,2.
$$
Therefore,
$$
I_\varepsilon(u_1,0)=a_1:=\inf_{(u,0)\in\mathcal{N}_\varepsilon'} I_\varepsilon(u,0), \quad I_\varepsilon(0,u_2)=a_2:=\inf_{(0,v)\in\mathcal{N}_\varepsilon'} I_\varepsilon(0,v).
$$
It is claimed that, for any $s\in\mathbb{R}$, there exists an unique $t(s)>0$ such that $\big(\sqrt[p]{t(s)}u_1$, $\sqrt[p]{t(s)}su_2\big)\in\mathcal{N}'_\varepsilon$. In fact,
$$
\aligned
t(s)^{\frac{p^\ast-p-2\varepsilon}{p}}&=\frac{\int_{B(0,1)}\big(|\nabla u_1|^p+|s|^p|\nabla u_2|^p\big)}{\int_{B(0,1)}\big(\mu_1|u_1|^{p^\ast-2\varepsilon} +\mu_2|su_2|^{p^\ast-2\varepsilon} +\gamma|u_1|^{\alpha-\varepsilon} |su_2|^{\beta-\varepsilon}\big)}\\
&=\frac{qa_1+qa_2|s|^p}{qa_1+qa_2|s|^{p^\ast-2\varepsilon} +|s|^{\beta-\varepsilon}\int_{B(0,1)}\gamma|u_1|^{\alpha-\varepsilon} |u_2|^{\beta-\varepsilon}},
\endaligned
$$
where $q:=\frac{p(p^\ast-2\varepsilon)}{p^\ast-p-2\varepsilon} =\frac{p(Np-2\varepsilon+2\varepsilon p)}{p^2-2\varepsilon N+2\varepsilon p} \to N$ as $\varepsilon\to0$. Noticing that $t(0)=1$, we have
$$
\lim_{s\to0}\frac{t'(s)}{|s|^{\beta-\varepsilon-2}s}=-\frac{(\beta-\varepsilon) \int_{B(0,1)}\gamma|u_1|^{\alpha-\varepsilon}|u_2|^{\beta-\varepsilon}} {(p^\ast-2\varepsilon)a_1},
$$
that is,
$$
t'(s)=-\frac{(\beta-\varepsilon) \int_{B(0,1)}\gamma|u_1|^{\alpha-\varepsilon}|u_2|^{\beta-\varepsilon}} {(p^\ast-2\varepsilon)a_1}|s|^{\beta-\varepsilon-2}s\big(1+o(1)\big), \quad\text{as}\ s\to0.
$$
Then,
$$
t(s)=1-\frac{\int_{B(0,1)}\gamma|u_1|^{\alpha-\varepsilon}|u_2| ^{\beta-\varepsilon}} {(p^\ast-2\varepsilon)a_1}|s|^{\beta-\varepsilon}\big(1+o(1)\big),\quad\text{as}\ s\to0,
$$
and so,
$$
t(s)^{\frac{p^\ast-2\varepsilon}{p}}=1-\frac{\int_{B(0,1)}\gamma |u_1|^{\alpha-\varepsilon} |u_2|^{\beta-\varepsilon}}{pa_1}|s|^{\beta-\varepsilon}\big(1+o(1)\big), \quad\text{as}\ s\to0.
$$
Since $\frac{1}{p}-\frac{1}{q}=\frac{1}{p^\ast-2\varepsilon}$, we have
$$
\aligned
A_\varepsilon\le&I_\varepsilon\big(\sqrt[p]{t(s)}u_1,\sqrt[p]{t(s)}su_2\big)\\
=&\Big(\frac{1}{p}-\frac{1}{p^\ast-2\varepsilon}\Big)\Big(qa_1+qa_2|s| ^{p^\ast-2\varepsilon} +|s|^{\beta-\varepsilon}\int_{B(0,1)}\gamma|u_1|^{\alpha-\varepsilon} |u_2|^{\beta-\varepsilon}\Big)t^{\frac{p^\ast-2\varepsilon}{p}}\\
=&a_1-\Big(\frac{1}{p}-\frac{1}{q}\Big)
|s|^{\beta-\varepsilon}\int_{B(0,1)}\gamma|u_1|^{\alpha-\varepsilon} |u_2|^{\beta-\varepsilon}+o(|s|^{\beta-\varepsilon})\\
<&a_1=\inf_{(u,0)\in\mathcal{N}_\varepsilon'}I_\varepsilon(u,0)\quad\text{as}\ |s|\ \text{small enough}.
\endaligned
$$
Similarly, $A_\varepsilon<\inf_{(0,v)\in\mathcal{N}_\varepsilon'}I_\varepsilon(0,v)$. This completes the proof.
\end{proof}

Noticing the definition of $\omega_{\mu_i}$ in the proof of Theorem \ref{th1},
similarly as Lemma \ref{l4}, we obtain that
\begin{equation}\label{30}
\aligned
A'&<\min\Big\{\inf_{(u,0)\in\mathcal{N}'}I(u,0), \inf_{(0,v)\in\mathcal{N}'}I(0,v)\Big\}\\
&=\min\big\{I(\omega_{\mu_1},0),I(0,\omega_{\mu_2})\big\}\\
&=\min\Big\{\frac{1}{N}\mu_1^{-\frac{N-p}{p}}S^{\frac{N}{p}}, \frac{1}{N}\mu_2^{-\frac{N-p}{p}}S^{\frac{N}{p}}\Big\}.
\endaligned
\end{equation}

\begin{proposition}\label{p3}
For any $\varepsilon\in(0,\min\{\alpha,\beta\}-1)$, system (\ref{31}) has a classical positive least energy solution $(u_\varepsilon,v_\varepsilon)$, and $u_\varepsilon,v_\varepsilon$ are radially symmetric decreasing.
\end{proposition}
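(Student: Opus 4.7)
\noindent\textbf{Proof sketch for Proposition \ref{p3}.}

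The plan is to realize $(u_\varepsilon,v_\varepsilon)$ as a minimizer of $I_\varepsilon$ on the Nehari-type manifold $\mathcal{N}'_\varepsilon$ by the direct method, exploiting that on the bounded domain $B(0,1)$ the embedding $W^{1,p}_0(B(0,1))\hookrightarrow L^{p^\ast-2\varepsilon}(B(0,1))$ is compact (since $\varepsilon>0$), and then to conclude by symmetrization and the standard $p$-Laplacian regularity theory. First, I would check that for every $(u,v)\in H(0,1)\setminus\{(0,0)\}$ there is a unique $t(u,v)>0$ with $(tu,tv)\in\mathcal{N}'_\varepsilon$: using $\alpha+\beta=p^\ast$ and $\gamma>0$, the map $t\mapsto G_\varepsilon(tu,tv)=t^{p}A-t^{p^\ast-2\varepsilon}B$ with $A,B>0$ has exactly one positive root. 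On $\mathcal{N}'_\varepsilon$ one has $I_\varepsilon(u,v)=\big(\tfrac{1}{p}-\tfrac{1}{p^\ast-2\varepsilon}\big)\int_{B(0,1)}(|\nabla u|^p+|\nabla v|^p)$, so $I_\varepsilon$ is bounded below by a positive constant on $\mathcal{N}'_\varepsilon$ (Sobolev prevents the $H(0,1)$-norm from degenerating), and any minimizing sequence $(u_n,v_n)\subset\mathcal{N}'_\varepsilon$ is bounded in $H(0,1)$.

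Next I would symmetrize: let $u_n^\ast,v_n^\ast$ denote the Schwarz decreasing rearrangements of $|u_n|,|v_n|$. By the P\'olya--Szeg\H{o} inequality the $p$-Dirichlet integrals do not increase, the $L^q$ norms are preserved, and Hardy--Littlewood gives $\int|u_n|^{\alpha-\varepsilon}|v_n|^{\beta-\varepsilon}\le\int(u_n^\ast)^{\alpha-\varepsilon}(v_n^\ast)^{\beta-\varepsilon}$; since $\gamma>0$, this forces $G_\varepsilon(u_n^\ast,v_n^\ast)\le 0$. Rescaling by the unique $t_n\in(0,1]$ puts $(t_n u_n^\ast,t_n v_n^\ast)$ back on $\mathcal{N}'_\varepsilon$ and does not increase $I_\varepsilon$, so I may assume from the outset that each $u_n,v_n$ is non-negative, radially symmetric and non-increasing.

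After extracting a subsequence, $u_n\rightharpoonup u_\varepsilon$ and $v_n\rightharpoonup v_\varepsilon$ in $W^{1,p}_0(B(0,1))$ and, by compact embedding, strongly in $L^{p^\ast-2\varepsilon}(B(0,1))$; in particular every nonlinear term in $G_\varepsilon$ and $I_\varepsilon$ passes to the limit (the coupling term by H\"older with exponents $(p^\ast-2\varepsilon)/(\alpha-\varepsilon)$ and $(p^\ast-2\varepsilon)/(\beta-\varepsilon)$). Weak lower semicontinuity of the $p$-Dirichlet integral gives $G_\varepsilon(u_\varepsilon,v_\varepsilon)\le 0$. If $G_\varepsilon(u_\varepsilon,v_\varepsilon)<0$, scaling $(u_\varepsilon,v_\varepsilon)$ by the unique $s\in(0,1)$ that lands it on $\mathcal{N}'_\varepsilon$ would produce an element with energy strictly less than $\liminf I_\varepsilon(u_n,v_n)=A_\varepsilon$, a contradiction; hence $G_\varepsilon(u_\varepsilon,v_\varepsilon)=0$ and $(u_\varepsilon,v_\varepsilon)$ is a minimizer. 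The limit cannot be $(0,0)$ since $\mathcal{N}'_\varepsilon$ is bounded away from the origin in $H(0,1)$, and Lemma \ref{l4} (which gives $A_\varepsilon<\min\{a_1,a_2\}$) rules out $u_\varepsilon\equiv 0$ or $v_\varepsilon\equiv 0$, because the corresponding semitrivial element would sit in $\mathcal{N}'_\varepsilon$ with energy $\ge a_1$ or $\ge a_2$.

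Finally, a single-constraint Lagrange-multiplier argument yields a weak solution: if $I'_\varepsilon(u_\varepsilon,v_\varepsilon)=\mu\,G'_\varepsilon(u_\varepsilon,v_\varepsilon)$, testing with $(u_\varepsilon,v_\varepsilon)$ gives $\mu\,G'_\varepsilon(u_\varepsilon,v_\varepsilon)(u_\varepsilon,v_\varepsilon)=0$, and $G'_\varepsilon(u_\varepsilon,v_\varepsilon)(u_\varepsilon,v_\varepsilon)=(p-p^\ast+2\varepsilon)\int_{B(0,1)}(|\nabla u_\varepsilon|^p+|\nabla v_\varepsilon|^p)\ne 0$, so $\mu=0$. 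The $C^{1,\sigma}$-regularity theory for quasilinear $p$-Laplace equations (DiBenedetto, Tolksdorf, Lieberman) upgrades the weak solution to a classical one, and the strong maximum principle of V\'azquez yields $u_\varepsilon,v_\varepsilon>0$ in $B(0,1)$. The main obstacle I foresee is in the third step: establishing that the weak limit lies on $\mathcal{N}'_\varepsilon$ without a priori strong convergence of the gradients. The rescaling device above circumvents this by combining the strong $L^{p^\ast-2\varepsilon}$ convergence of the nonlinear terms with minimality of $A_\varepsilon$ to exclude the ``deficient'' case $G_\varepsilon(u_\varepsilon,v_\varepsilon)<0$.
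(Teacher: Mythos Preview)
Your proposal is correct and follows essentially the same route as the paper's proof: symmetrize the minimizing sequence via P\'olya--Szeg\H{o} and Hardy--Littlewood, use the compact embedding $W^{1,p}_0(B(0,1))\hookrightarrow L^{p^\ast-2\varepsilon}(B(0,1))$ to pass to the limit in the nonlinear terms, recover the constraint by the rescaling trick (the paper scales by $\sqrt[p]{t}$ rather than $t$, which is immaterial), kill the Lagrange multiplier by testing against $(u_\varepsilon,v_\varepsilon)$, exclude semitrivial limits via Lemma~\ref{l4}, and finish with $p$-Laplacian regularity and the strong maximum principle. The only point where your write-up is slightly looser than the paper's is the nontriviality of the weak limit: ``$\mathcal{N}'_\varepsilon$ is bounded away from the origin in $H(0,1)$'' does not by itself rule out a zero weak limit, but since you already have strong $L^{p^\ast-2\varepsilon}$ convergence of the nonlinear integral (which on the constraint equals the Dirichlet energy, bounded below by a positive constant), the conclusion follows---the paper makes exactly this computation explicit.
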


\begin{proof}
It is standard to see that $A_\varepsilon>0$. For $(u,v)\in\mathcal{N}_\varepsilon'$ with $u\ge0,v\ge0$, we denote by $(u^\ast,v^\ast)$ as its Schwartz symmetrization. By the properties of Schwartz symmetrization and $\gamma>0$, we get that
$$
\int_{B(0,1)}\big(|\nabla u^\ast|^p+|\nabla v^\ast|^p\big)\le\int_{B(0,1)}\big(\mu_1|u^\ast|^{p^\ast-2\varepsilon} +\mu_2|v^\ast|^{p^\ast-2\varepsilon} +\gamma|u^\ast|^{\alpha-\varepsilon} |v^\ast|^{\beta-\varepsilon}\big).
$$
Obviously, there exists $t^\ast\in(0,1]$ such that $\big(\sqrt[p]{t^\ast} u^\ast,\sqrt[p]{t^\ast} v^\ast\big)\in\mathcal{N}_\varepsilon'$. Therefore,
\begin{equation}\label{32}
\aligned
I_\varepsilon\big(\sqrt[p]{t^\ast}u^\ast, \sqrt[p]{t^\ast}v^\ast\big)&=\Big(\frac{1}{p} -\frac{1}{p^\ast-2\varepsilon}\Big)t^\ast\int_{B(0,1)}\big(|\nabla u^\ast|^p+|\nabla v^\ast|^p\big)\\
&\le\frac{p^\ast-2\varepsilon-p}{p(p^\ast-2\varepsilon)}\int_{B(0,1)}\big(|\nabla u|^p+|\nabla v|^p\big)\\
&=I_\varepsilon(u,v).
\endaligned
\end{equation}
Then, we may choose a minimizing sequence $(u_n,v_n)\in\mathcal{N}_\varepsilon'$ of $A_\varepsilon$ such that $(u_n,v_n)=(u_n^\ast,v_n^\ast)$ and $I_\varepsilon(u_n,v_n)\to A_\varepsilon$ as $n\to\infty$. By (\ref{32}), we see that $u_n,v_n$ are uniformly bounded in $H_0^1\big(B(0,1)\big)$. Passing to a subsequence, we may assume that $u_n\rightharpoonup u_\varepsilon, v_n\rightharpoonup v_\varepsilon$ weakly in $H_0^1\big(B(0,1)\big)$. Since $H_0^1\big(B(0,1)\big)\hookrightarrow\hookrightarrow L^{p^\ast-2\varepsilon}\big(B(0,1)\big)$, we deduce that
$$
\aligned
&\int_{B(0,1)}\big(\mu_1|u_\varepsilon|^{p^\ast-2\varepsilon} +\mu_2|v_\varepsilon|^{p^\ast-2\varepsilon} +\gamma|u_\varepsilon|^{\alpha-\varepsilon} |v_\varepsilon|^{\beta-\varepsilon}\big)\\
&=\lim_{n\to\infty}\int_{B(0,1)}\big(\mu_1|u_n|^{p^\ast-2\varepsilon} +\mu_2|v_n|^{p^\ast-2\varepsilon} +\gamma|u_n|^{\alpha-\varepsilon} |v_n|^{\beta-\varepsilon}\big)\\
&=\frac{p(p^\ast-2\varepsilon)}{p^\ast-2\varepsilon-p}\lim_{n\to\infty} I_\varepsilon (u_n,v_n)\\
&=\frac{p(p^\ast-2\varepsilon)}{p^\ast-2\varepsilon-p}A_\varepsilon>0,
\endaligned
$$
which implies that $(u_\varepsilon,v_\varepsilon)\neq(0,0)$. Moreover, $u_\varepsilon\ge0,v_\varepsilon\ge0$ are radially symmetric. Noticing that
$\int_{B(0,1)}\big(|\nabla u_\varepsilon|^p+|\nabla v_\varepsilon|^p\big)\le \lim_{n\to\infty}\int_{B(0,1)}\big(|\nabla u_n|^p+|\nabla v_n|^p\big)$, we get that
$$
\int_{B(0,1)}\big(|\nabla u_\varepsilon|^p+|\nabla v_\varepsilon|^p\big)\le \int_{B(0,1)}\big(\mu_1|u_\varepsilon|^{p^\ast-2\varepsilon} +\mu_2|v_\varepsilon|^{p^\ast-2\varepsilon} +\gamma|u_\varepsilon|^{\alpha-\varepsilon} |v_\varepsilon|^{\beta-\varepsilon}\big).
$$
Then, there exists $t_\varepsilon\in(0,1]$ such that $\big(\sqrt[p]{t_\varepsilon}u_\varepsilon,\sqrt[p]{t_\varepsilon}v_\varepsilon\big) \in\mathcal{N}_\varepsilon'$, and therefore,
$$
\aligned
A_\varepsilon&\le I_\varepsilon\big(\sqrt[p]{t_\varepsilon}u_\varepsilon, \sqrt[p]{t_\varepsilon}v_\varepsilon\big)\\
&=\Big(\frac{1}{p} -\frac{1}{p^\ast-2\varepsilon}\Big)t_\varepsilon\int_{B(0,1)}\big(|\nabla u_\varepsilon|^p+|\nabla v_\varepsilon|^p\big)\\
&\le\lim_{n\to\infty}\frac{p^\ast-2\varepsilon-p}{p(p^\ast-2\varepsilon)} \int_{B(0,1)}\big(|\nabla u_n|^p+|\nabla v_n|^p\big)\\
&=\lim_{n\to\infty}I_\varepsilon(u_n,v_n)=A_\varepsilon,
\endaligned
$$
which yields that $t_\varepsilon=1$, $(u_\varepsilon,v_\varepsilon)\in \mathcal{N}_\varepsilon'$, $I(u_\varepsilon,v_\varepsilon)=A_\varepsilon$, and
$$
\int_{B(0,1)}\big(|\nabla u_\varepsilon|^p+|\nabla v_\varepsilon|^p\big)
=\lim_{n\to\infty}\int_{B(0,1)}\big(|\nabla u_n|^p+|\nabla v_n|^p\big).
$$
That is, $u_n\to u_\varepsilon, v_n\to v_\varepsilon$ strongly in $H_0^1\big(B(0,1)\big)$. It follows from the standard minimization theory that there exists a Lagrange multiplier $L\in\mathbb{R}$ satisfying
$$
I_\varepsilon'(u_\varepsilon,v_\varepsilon)+LG_\varepsilon'(u_\varepsilon, v_\varepsilon)=0.
$$
Since $I_\varepsilon'(u_\varepsilon,v_\varepsilon)(u_\varepsilon,v_\varepsilon)= G_\varepsilon(u_\varepsilon, v_\varepsilon)=0$ and
$$
\aligned
G_\varepsilon'&(u_\varepsilon,v_\varepsilon)(u_\varepsilon,v_\varepsilon) \\ =&-(p^\ast-2\varepsilon-p)\int_{B(0,1)}\big(\mu_1|u_\varepsilon| ^{p^\ast-2\varepsilon} +\mu_2|v_\varepsilon|^{p^\ast-2\varepsilon} +\gamma|u_\varepsilon|^{\alpha-\varepsilon} |v_\varepsilon|^{\beta-\varepsilon}\big)<0,
\endaligned
$$
we get that $L=0$ and so $I_\varepsilon'(u_\varepsilon,v_\varepsilon)=0$. By $A_\varepsilon=I(u_\varepsilon,v_\varepsilon)$ and Lemma \ref{l4}, we have $u_\varepsilon\not\equiv0$ and $v_\varepsilon\not\equiv0$. Since $u_\varepsilon,v_\varepsilon\ge0$ are radially symmetric decreasing, by the regularity theory and the maximum principle, we obtain that $(u_\varepsilon,v_\varepsilon)$ is a classical positive least energy solution of (\ref{31}). This completes the proof.
\end{proof}

\noindent{\bf Proof of Theorem \ref{th3}.}
We claim that
\begin{equation}\label{36}
\aligned
A'(R)\equiv A'\ \ \text{for all}\ R>0.
\endaligned
\end{equation}
Indeed, assume $R_1<R_2$. Since $\mathcal{N}'(R_1)\subset\mathcal{N}'(R_2)$, we get that $A'(R_2)\le A'(R_1)$. On the other hand, for every $(u,v)\in\mathcal{N}'(R_2)$, define
$$
\big(u_1(x),v_1(x)\big):=\Big(\Big(\frac{R_2}{R_1}\Big)^{\frac{N-p}{p}} u\Big(\frac{R_2}{R_1}x\Big),\Big(\frac{R_2}{R_1}\Big)^{\frac{N-p}{p}} v\Big(\frac{R_2}{R_1}x\Big)\Big),
$$
then it is easy to see that $(u_1,v_1)\in\mathcal{N}'(R_1)$. Thus, we have
$$
A'(R_1)\le I(u_1,v_1)=I(u,v),\ \ \forall(u,v)\in\mathcal{N}'(R_2),
$$
which means that $A'(R_1)\le A'(R_2)$. Hence, $A'(R_1)=A'(R_2)$. Obviously, $A'\le A'(R)$. Let $(u_n,v_n)\in\mathcal{N}'$ be a minimizing sequence of $A'$. We may assume that $u_n,v_n\in H_0^1\big(B(0,R_n)\big)$ for some $R_n>0$. Therefore, $(u_n,v_n)\in\mathcal{N}'(R_n)$ and
$$
A'=\lim_{n\to\infty}I(u_n,v_n)\ge\lim_{n\to\infty}A'(R_n)=A'(R),
$$
which completes the proof of the claim.

Recalling (\ref{33}) and (\ref{34}), for every $(u,v)\in\mathcal{N}'(1)$, there exists $t_\varepsilon>0$ with $t_\varepsilon\to1$ as $\varepsilon\to0$ such that $\big(\sqrt[p]{t_\varepsilon}u,\sqrt[p]{t_\varepsilon}v\big) \in\mathcal{N}_\varepsilon'$. Then,
$$
\limsup_{\varepsilon\to0}A_\varepsilon\le\limsup_{\varepsilon\to0}I_\varepsilon \big(\sqrt[p]{t_\varepsilon}u,\sqrt[p]{t_\varepsilon}v\big)=I(u,v), \ \ \forall(u,v)\in\mathcal{N}'(1).
$$
It follows from (\ref{36}) that
\begin{equation}\label{40}
\aligned
\limsup_{\varepsilon\to0}A_\varepsilon\le A'(1)=A'.
\endaligned
\end{equation}
According to Proposition \ref{p3}, we may let $(u_\varepsilon,v_\varepsilon)$ be a positive least energy solution of (\ref{31}), which is radially symmetric decreasing. By (\ref{34}) and Sobolev inequality, we have
\begin{equation}\label{38}
\aligned
A_\varepsilon=\frac{p^\ast-2\varepsilon-2}{2(p^\ast-2\varepsilon)}\int_{B(0,1)} \big(|\nabla u_\varepsilon|^p+|\nabla v_\varepsilon|^p\big)\ge C>0,\ \ \forall \varepsilon\in\big(0,\frac{\min\{\alpha,\beta\}-1}{2}\big],
\endaligned
\end{equation}
where $C$ is independent of $\varepsilon$. Then, it follows from (\ref{40}) that $u_\varepsilon,v_\varepsilon$ are uniformly bounded in $H_0^1\big(B(0,1)\big)$. We may assume that $u_\varepsilon\rightharpoonup u_0,v_\varepsilon\rightharpoonup v_0$, up to a subsequence, weakly in $H_0^1\big(B(0,1)\big)$. Hence, $(u_0,v_0)$ is a solution of
\begin{equation}
\aligned
\begin{cases}\label{37}
-\Delta_p u=\mu_1|u|^{p^\ast-2}u+\frac{\alpha\gamma}{p^\ast} |u|^{\alpha-2}u |v|^{\beta},\ \ x\in B(0,1),\\
-\Delta_p v=\mu_2|v|^{p^\ast-2}v+\frac{\beta\gamma}{p^\ast} |u|^{\alpha} |v|^{\beta-2}v,\ \ x\in B(0,1),\\
u,v\in H_0^1\big(B(0,1)\big).
\end{cases}
\endaligned
\end{equation}
Suppose by contradiction that $\|u_\varepsilon\|_\infty+\|v_\varepsilon\|_\infty$
is uniformly bounded. Then, by the Dominated Convergent Theorem, we get that
$$
\aligned
\lim_{\varepsilon\to0}\int_{B(0,1)}u_\varepsilon^{p^\ast-2\varepsilon} & =\int_{B(0,1)}u_0^{p^\ast},\qquad
\lim_{\varepsilon\to0}\int_{B(0,1)}v_\varepsilon^{p^\ast-2\varepsilon} =\int_{B(0,1)}v_0^{p^\ast},\\
&\lim_{\varepsilon\to0}\int_{B(0,1)}u_\varepsilon^{\alpha-\varepsilon} v_\varepsilon^{\beta-\varepsilon} =\int_{B(0,1)}u_0^\alpha v_0^\beta.
\endaligned
$$
Combining these with $I_\varepsilon'(u_\varepsilon,v_\varepsilon)=I'(u_0,v_0)$, similarly as the proof of Proposition \ref{p3}, we see that $u_\varepsilon\to u_0,v_\varepsilon\to v_0$ strongly in $H_0^1\big(B(0,1)\big)$. It follows from (\ref{38}) that $(u_0,v_0)\neq(0,0)$, and moreover, $u_0\ge0,v_0\ge0$. Without loss of generality, we may assume that $u_0\not\equiv0$. By the strong maximum principle, we obtain that $u_0>0$ in $B(0,1)$. By Pohozaev identity, we have a contradiction
$$
0<\int_{\partial B(0,1)}\big(|\nabla u_0|^p+|\nabla v_0|^p\big)(x\cdot\nu)d\sigma=0,
$$
where $\nu$ is the outward unit normal vector on $\partial B(0,1)$. Hence, $\|u_\varepsilon\|_\infty+\|v_\varepsilon\|_\infty\to\infty$, as $\varepsilon\to0$. Let $K_\varepsilon:=\max\{u_\varepsilon(0),v_\varepsilon(0)\}$. Since $u_\varepsilon(0)=\max_{B(0,1)}u_\varepsilon(x)$ and $v_\varepsilon(0)=\max_{B(0,1)}v_\varepsilon(x)$, we see that $K_\varepsilon\to+\infty$, as $\varepsilon\to0$. Setting
$$
U_\varepsilon(x):=K_\varepsilon^{-1}u_\varepsilon(K_\varepsilon^{-a_\varepsilon}x), \quad V_\varepsilon(x):=K_\varepsilon^{-1}v_\varepsilon(K_\varepsilon^{-a_\varepsilon}x), \quad a_\varepsilon:=\frac{p^\ast-p-p\varepsilon}{p}.
$$
we have
\begin{equation}\label{39}
\aligned
\max\{U_\varepsilon(0),V_\varepsilon(0)\}=\max\Big\{\max_{x\in B(0,K_\varepsilon^{a_\varepsilon})}U_\varepsilon(x), \max_{x\in B(0,K_\varepsilon^{a_\varepsilon})}V_\varepsilon(x)\Big\}=1
\endaligned
\end{equation}
and $(U_\varepsilon,V_\varepsilon)$ is a solution of
$$
\begin{cases}
-\Delta_p U_\varepsilon=\mu_1U_\varepsilon^{p^\ast-2\varepsilon-1}+ \frac{(\alpha-\varepsilon)\gamma}{p^\ast-2\varepsilon} U_\varepsilon^{\alpha-1-\varepsilon} V_\varepsilon^{\beta-\varepsilon},\ \ x\in B(0,K_\varepsilon^{a_\varepsilon}),\\
-\Delta_p V_\varepsilon=\mu_2V_\varepsilon^{p^\ast-2\varepsilon-1}+ \frac{(\beta-\varepsilon)\gamma}{p^\ast-2\varepsilon} U_\varepsilon^{\alpha-\varepsilon} V_\varepsilon^{\beta-1-\varepsilon},\ \ x\in B(0,K_\varepsilon^{a_\varepsilon}).
\end{cases}
$$
Since
$$
\aligned
\int_{\mathbb{R}^N}|\nabla U_\varepsilon(x)|^p\text{d}x
&=K_\varepsilon^{a_\varepsilon(N-p)-p}\int_{\mathbb{R}^N} |\nabla u_\varepsilon(y)|^p\text{d}y\\
&=K_\varepsilon^{-(N-p)\varepsilon}\int_{\mathbb{R}^N} |\nabla u_\varepsilon(x)|^p\text{d}x\le\int_{\mathbb{R}^N}|\nabla u_\varepsilon(x)|^p\text{d}x,
\endaligned
$$
we see that $\{(U_\varepsilon,V_\varepsilon)\}_{n\ge1}$ is bounded in $D$. By elliptic estimates, we get that, up to a subsequence, $(U_\varepsilon,V_\varepsilon)\to(U,V)\in D$ uniformly in every compact subset of $\mathbb{R}^N$ as $\varepsilon\to0$, and $(U,V)$ is a solution of (\ref{6}), that is, $I'(U,V)=0$. Moreover, $U\ge0,V\ge0$ are radially symmetric decreasing. By (\ref{39}), we have $(U,V)\neq(0,0)$ and so $(U,V)\in\mathcal{N}'$. Thus,
$$
\aligned
A'&\le I(U,V)=\Big(\frac{1}{p}-\frac{1}{p^\ast}\Big)\int_{\mathbb{R}^N}\big( |\nabla U|^p+|\nabla V|^p\big)\text{d}x\\
&\le\liminf_{\varepsilon\to0}\Big(\frac{1}{p}-\frac{1}{p^\ast}\Big) \int_{B(0,K_\varepsilon^{a_\varepsilon})}\big( |\nabla U_\varepsilon|^p+|\nabla V_\varepsilon|^p\big)\text{d}x\\
&=\liminf_{\varepsilon\to0}\Big(\frac{1}{p}-\frac{1}{p^\ast-2\varepsilon}\Big) \int_{B(0,K_\varepsilon^{a_\varepsilon})}\big( |\nabla U_\varepsilon|^p+|\nabla V_\varepsilon|^p\big)\text{d}x\\
&\le\liminf_{\varepsilon\to0}\Big(\frac{1}{p}-\frac{1}{p^\ast-2\varepsilon}\Big) \int_{B(0,1)}\big( |\nabla u_\varepsilon|^p+|\nabla v_\varepsilon|^p\big)\text{d}x\\
&=\liminf_{\varepsilon\to0}A_\varepsilon.
\endaligned
$$
It follows from (\ref{40}) that $A'\le I(U,V)\le\liminf_{\varepsilon\to0}A_\varepsilon\le A'$, which means that $I(U,V)=A'$. By (\ref{30}), we get that $U\not\equiv0$ and
$V\not\equiv0$. The strong maximum principle guarantees that $U>0$ and $V>0$. Since $(U,V)\in\mathcal{N}$, we have $I(U,V)\ge A\ge A'$. Therefore,
\begin{equation}\label{411}
I(U,V)=A=A',
\end{equation}
that is, $(U,V)$ is a positive least energy solution of (\ref{6}) with $(H_1)$ holding, which is radially symmetric decreasing. This completes the proof.
\hfill$\Box$

\begin{remark}
If $(H_1)$ and $(C_2)$ hold, then it can be seen from Theorems \ref{th2} and \ref{th3} that $(\sqrt[p]{k_0}U_{\varepsilon,y},\sqrt[p]{l_0}U_{\varepsilon,y})$ is a positive least energy solution of (\ref{6}), where $(k_0,l_0)$ is defined by (\ref{15}) and $U_{\varepsilon,y}$ is defined by (\ref{8}).
\end{remark}

\noindent{\bf Proof of Theorem \ref{th4}.}
To prove the existence of $\big(k(\gamma),l(\gamma)\big)$ for $\gamma>0$ small, recalling (\ref{12}), we denote $F_i(k,l)$ by $F_i(k,l,\gamma),\ i=1,2$ in this proof. Let $k(0)=\mu_1^{-\frac{p}{p^\ast-p}}$ and $l(0)=\mu_2^{-\frac{p}{p^\ast-p}}$. Then $F_1\big(k(0),l(0),0\big)=F_2\big(k(0),l(0),0\big)=0$. Obviously, we have
$$
\aligned
\partial_kF_1\big(k(0),l(0),0\big)&=\frac{p^\ast-p}{p}\mu_1k^{\frac{p^\ast-2p}{p}} >0,\\
\partial_lF_1\big(k(0),l(0),0\big)&=\partial_kF_2\big(k(0),l(0),0\big)=0,\\
\partial_lF_2\big(k(0),l(0),0\big)&=\frac{p^\ast-p}{p}\mu_2l^{\frac{p^\ast-2p}{p}} >0,
\endaligned
$$
which implies that
$$
\aligned \det\left(\begin{array}{cc}
     \partial_kF_1\big(k(0),l(0),0\big) & \partial_lF_1\big(k(0),l(0),0\big)\\
     \partial_kF_2\big(k(0),l(0),0\big) & \partial_lF_2\big(k(0),l(0),0\big)\\
     \end{array}\right)>0.
\endaligned
$$
By the implicit function theorem, we see that $k(\gamma),l(\gamma)$ are well defined and of class $C^1$ in $(-\gamma_2,\gamma_2)$ for some $\gamma_2>0$, and $F_1\big(k(\gamma),l(\gamma),\gamma\big)=F_2\big(k(\gamma),l(\gamma), \gamma\big)=0$. Then, $\big(\sqrt[p]{k(\gamma)}U_{\varepsilon,y}, \sqrt[p]{l(\gamma)}U_{\varepsilon,y}\big)$ is a positive solution of (\ref{6}). Noticing that
$$
\lim_{\gamma\to0}\big(k(\gamma)+l(\gamma)\big)=k(0)+l(0)=\mu_1^{-\frac{N-p}{p}} +\mu_2^{-\frac{N-p}{p}},
$$
there exists $\gamma_1\in(0,\gamma_2]$ such that
$$
k(\gamma)+l(\gamma)>\min\big\{\mu_1^{-\frac{N-p}{p}},\ \mu_2^{-\frac{N-p}{p}}\big\},\ \ \forall \gamma\in(0,\gamma_1).
$$
It follows from (\ref{30}) and (\ref{411}) that
$$
\aligned
I\big(\sqrt[p]{k(\gamma)}U_{\varepsilon,y}, \sqrt[p]{l(\gamma)}U_{\varepsilon,y}\big) &=\frac{1}{N}\big(k(\gamma)+l(\gamma)\big)S^{\frac{N}{p}}\\
&>\min\Big\{\frac{1}{N} \mu_1^{-\frac{N-p}{p}}S^{\frac{N}{p}},\frac{1}{N}\mu_2^{-\frac{N-p}{p}} S^{\frac{N}{p}}\Big\}\\
&>A'=A=I(U,V),
\endaligned
$$
that is, when $(H_1)$ is satisfied, $\big(\sqrt[p]{k(\gamma)}U_{\varepsilon,y}, \sqrt[p]{l(\gamma)}U_{\varepsilon,y}\big)$ is a different positive solution of (\ref{6}) with respect to $(U,V)$. This completes the proof.
\hfill$\Box$

\section{Proof of Theorem \ref{th5}}
In this section, we consider the case $(H_2)$.
\begin{proposition} \label{Proposition 1}
Let $q, r > 1$ satisfy $q + r \le p^\ast$ and set
$$
\aligned
S_{q,r}(\Omega) &= \inf_{\substack{u, v \in W^{1,p}_0(\Omega)\\ u, v \ne 0}}\, \frac{\int_\Omega \left(|\nabla u|^p + |\nabla v|^p\right) dx}{\left(\int_\Omega |u|^q\, |v|^r\, dx\right)^{\frac{p}{q+r}}},\\
S_{q+r}(\Omega) &= \inf_{\substack{u \in W^{1,p}_0(\Omega)\\ u \ne 0}}\, \frac{\int_\Omega |\nabla u|^p\, dx}{\left(\int_\Omega |u|^{q+r}\, dx\right)^{\frac{p}{q+r}}}.
\endaligned
$$
Then
\begin{equation} \label{02}
S_{q,r}(\Omega) = \frac{q + r}{(q^q\, r^r)^{\frac{1}{q+r}}}\, S_{q+r}(\Omega).
\end{equation}
Moreover, if $u_0$ is a minimizer for $S_{q+r}(\Omega)$, then $(q^{\frac{1}{p}}\, u_0,r^{\frac{1}{p}}\, u_0)$ is a minimizer for $S_{q,r}(\Omega)$.
\end{proposition}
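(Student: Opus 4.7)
The plan is to pin down $S_{q,r}(\Omega)$ by proving matching upper and lower bounds, then observe that the pair $(q^{1/p}u_0, r^{1/p}u_0)$ realizes equality at both ends.

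For the upper bound, I would test the Rayleigh quotient for $S_{q,r}(\Omega)$ with the pair $(q^{1/p}u, r^{1/p}u)$ for arbitrary nonzero $u \in W_0^{1,p}(\Omega)$. The numerator becomes $(q+r)\int_\Omega |\nabla u|^p\,dx$ and the denominator becomes $\bigl(q^{q/p}r^{r/p}\bigr)^{p/(q+r)}\bigl(\int_\Omega |u|^{q+r}\,dx\bigr)^{p/(q+r)}$, so the quotient factors exactly as $\frac{q+r}{(q^q r^r)^{1/(q+r)}}$ times the Rayleigh quotient for $S_{q+r}(\Omega)$. Taking the infimum over $u$ yields $S_{q,r}(\Omega) \le \frac{q+r}{(q^q r^r)^{1/(q+r)}} S_{q+r}(\Omega)$, and if $u_0$ attains $S_{q+r}(\Omega)$ then $(q^{1/p}u_0, r^{1/p}u_0)$ attains this upper bound.

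For the lower bound, I would fix an arbitrary admissible pair $(u,v)$ and apply H\"older's inequality with conjugate exponents $(q+r)/q$ and $(q+r)/r$ to decouple the integrand:
\[
\int_\Omega |u|^q |v|^r\,dx \le \Bigl(\int_\Omega |u|^{q+r}\,dx\Bigr)^{q/(q+r)} \Bigl(\int_\Omega |v|^{q+r}\,dx\Bigr)^{r/(q+r)}.
\]
Writing $X := \bigl(\int_\Omega |u|^{q+r}\bigr)^{p/(q+r)}$ and $Y := \bigl(\int_\Omega |v|^{q+r}\bigr)^{p/(q+r)}$, the defining inequalities $\int_\Omega |\nabla u|^p \ge S_{q+r}(\Omega) X$ and $\int_\Omega |\nabla v|^p \ge S_{q+r}(\Omega) Y$ reduce the problem to the lower bound
\[
\frac{\int_\Omega (|\nabla u|^p + |\nabla v|^p)\,dx}{\bigl(\int_\Omega |u|^q |v|^r\,dx\bigr)^{p/(q+r)}} \ge S_{q+r}(\Omega)\cdot \frac{X+Y}{X^{q/(q+r)} Y^{r/(q+r)}}.
\]

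The main (and only nontrivial) step is then to minimize $g(X,Y) = (X+Y)/(X^{q/(q+r)} Y^{r/(q+r)})$ over $X,Y > 0$. The function is homogeneous of degree $0$, so setting $t = X/Y$ reduces it to $g(t) = t^{r/(q+r)} + t^{-q/(q+r)}$; a direct calculation shows the unique critical point is at $t_\ast = q/r$, and substituting back gives $g(t_\ast) = \frac{q+r}{(q^q r^r)^{1/(q+r)}}$ (this is the equality case of weighted AM-GM applied to the weights $q/(q+r)$ and $r/(q+r)$). Combined with the H\"older step, this yields $S_{q,r}(\Omega) \ge \frac{q+r}{(q^q r^r)^{1/(q+r)}} S_{q+r}(\Omega)$, matching the upper bound, and the ``moreover'' statement follows at once from the computation in the upper-bound step. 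The embedding $W_0^{1,p}(\Omega) \hookrightarrow L^{q+r}(\Omega)$, guaranteed by $q+r \le p^\ast$, ensures all the quotients are well defined.
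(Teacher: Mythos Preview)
Your proof is correct and follows essentially the same approach as the paper: both directions rest on H\"older's inequality, the Sobolev bound $\int|\nabla u|^p \ge S_{q+r}(\Omega)\bigl(\int|u|^{q+r}\bigr)^{p/(q+r)}$, and the elementary one-variable minimization $\min_{t>0}\bigl(t^{r/(q+r)} + t^{-q/(q+r)}\bigr) = (q+r)/(q^q r^r)^{1/(q+r)}$. The only difference is organizational---the paper rescales $v$ to $w = t^{1/p}v$ so that $\int|u|^{q+r} = \int|w|^{q+r}$ before applying H\"older, whereas you apply H\"older first and then optimize over $X/Y$; these are the same computation in different order.
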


\begin{proof}
For $u \ne 0$ in $W^{1,p}_0(\Omega)$ and $t > 0$, taking $v = t^{- \frac{1}{p}}\, u$ in the first quotient gives
\[
S_{q,r}(\Omega) \le \left[t^{\frac{r}{q+r}} + t^{- \frac{q}{q+r}}\right] \frac{\int_\Omega |\nabla u|^p\, dx}{\left(\int_\Omega |u|^{q+r}\, dx\right)^{\frac{p}{q+r}}},
\]
and minimizing the right-hand side over $u$ and $t$ shows that $S_{q,r}(\Omega)$ is less than or equal to the right-hand side of \eqref{02}. For $u, v \ne 0$ in $W^{1,p}_0(\Omega)$, let $w = t^{\frac{1}{p}}\, v$, where
\[
t^{\frac{q+r}{p}} = \frac{\int_\Omega |u|^{q+r}\, dx}{\int_\Omega |v|^{q+r}\, dx}.
\]
Then $\int_\Omega |u|^{q+r}\, dx = \int_\Omega |w|^{q+r}\, dx$ and hence
\[
\int_\Omega |u|^q\, |w|^r\, dx \le \int_\Omega |u|^{q+r}\, dx = \int_\Omega |w|^{q+r}\, dx
\]
by the H\"{o}lder inequality, so
\begin{align*}
&\frac{\int_\Omega \left(|\nabla u|^p + |\nabla v|^p\right) dx}{\left(\int_\Omega |u|^q\, |v|^r\, dx\right)^{\frac{p}{q+r}}}\\
& = \frac{\int_\Omega \left(t^{\frac{r}{q+r}}\, |\nabla u|^p + t^{- \frac{q}{q+r}}\, |\nabla w|^p\right) dx}{\left(\int_\Omega |u|^q\, |w|^r\, dx\right)^{\frac{p}{q+r}}}\\
& \ge t^{\frac{r}{q+r}}\, \frac{\int_\Omega |\nabla u|^p\, dx}{\left(\int_\Omega |u|^{q+r}\, dx\right)^{\frac{p}{q+r}}} + t^{- \frac{q}{q+r}}\, \frac{\int_\Omega |\nabla w|^p\, dx}{\left(\int_\Omega |w|^{q+r}\, dx\right)^{\frac{p}{q+r}}}\\
& \ge \left[t^{\frac{r}{q+r}} + t^{- \frac{q}{q+r}}\right] S_{q+r}(\Omega).
\end{align*}
The last expression is greater than or equal to the right-hand side of \eqref{02}, so minimizing over $(u,v)$ gives the reverse inequality.
\end{proof}

By Proposition \ref{Proposition 1},
\begin{equation} \label{011}
S_{a,b}(\Omega) = \frac{p}{(a^a\, b^b)^{\frac{1}{p}}}\, \lambda_1(\Omega), \qquad S_{\alpha,\beta} = \frac{p^\ast}{(\alpha^\alpha\, \beta^\beta)^{\frac{1}{p^\ast}}}\, S,
\end{equation}
where $\lambda_1(\Omega) > 0$ is the first Dirichlet eigenvalue of $- \Delta_p$ in $\Omega$. 
When $(H_2)$ is satisfied, we will obtain a nontrivial nonnegative solution of system \eqref{6} for $\lambda < S_{a,b}(\Omega)$.
Consider the $C^1$-functional
\[
\Phi(w) = \frac{1}{p} \int_\Omega \left[|\nabla u|^p + |\nabla v|^p - \lambda (u^+)^a (v^+)^b\right] dx - \frac{1}{p^\ast} \int_\Omega (u^+)^\alpha (v^+)^\beta dx, \quad w \in W,
\]
where $W = D^{1,p}_0(\Omega) \times D^{1,p}_0(\Omega)$ with the norm given by $\|w\|^p = |\nabla u|_p^p + |\nabla v|_p^p$ for $w = (u,v)$, $|\cdot|_p$ denotes the norm in $L^p(\Omega)$, and $u^\pm(x) = \max \{\pm u(x),0\}$ are the positive and negative parts of $u$, respectively. If $w$ is a critical point of $\Phi$,
\[
0 = \Phi'(w)\, (u^-,v^-) = \int_\Omega \left(|\nabla u^-|^p + |\nabla v^-|^p\right) dx
\]
and hence $(u^-,v^-) = 0$, so $w = (u^+,v^+)$ is a nonnegative weak solution of \eqref{6} with $(H_2)$ holding.


\begin{proposition} \label{Proposition 2}
If $0 \ne c < \frac{S_{\alpha,\beta}^{\frac{N}{p}}}{N}$ and $\lambda < S_{a,b}(\Omega)$, then every $(PS)_c$ sequence of $\Phi$ has a subsequence that converges weakly to a nontrivial critical point of $\Phi$.
\end{proposition}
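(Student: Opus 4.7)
The plan proceeds in three stages: boundedness of the $(PS)_c$ sequence, identification of the weak limit as a critical point, and an energy-threshold argument to exclude the zero limit.

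For a $(PS)_c$ sequence $\{w_n\} = \{(u_n, v_n)\} \subset W$, I would form
\[
\Phi(w_n) - \frac{1}{p^\ast}\,\Phi'(w_n)\,w_n = \frac{1}{N}\left(\|w_n\|^p - \lambda \int_\Omega (u_n^+)^a (v_n^+)^b\,dx\right),
\]
which equals $c + o(1 + \|w_n\|)$. The bound $\int_\Omega (u_n^+)^a (v_n^+)^b\,dx \le S_{a,b}(\Omega)^{-1}\|w_n\|^p$ coming from Proposition \ref{Proposition 1}, together with $\lambda < S_{a,b}(\Omega)$, yields boundedness of $\|w_n\|$ in $W$. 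Testing $\Phi'(w_n)$ against $(u_n^-, v_n^-)$, on which the positive-part nonlinear terms vanish, gives $\int_\Omega (|\nabla u_n^-|^p + |\nabla v_n^-|^p)\,dx \to 0$. Passing to a subsequence, $w_n \rightharpoonup w = (u, v)$ weakly in $W$ with $u, v \ge 0$, strongly in $L^q(\Omega)$ for every $q < p^\ast$, and a.e.\ in $\Omega$.

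To identify $w$ as a critical point of $\Phi$, I pass to the limit in $\Phi'(w_n)(\varphi, \psi) = o(1)$ for arbitrary $\varphi, \psi \in C_0^\infty(\Omega)$. The subcritical coupling integrands $(u_n^+)^{a-1}(v_n^+)^b$ and $(u_n^+)^a(v_n^+)^{b-1}$ converge strongly in $L^{p/(p-1)}$ by the Rellich--Kondrachov theorem, since $a + b = p < p^\ast$. The critical integrands $(u_n^+)^{\alpha-1}(v_n^+)^\beta$ and $(u_n^+)^\alpha (v_n^+)^{\beta-1}$ are bounded in $L^{p^\ast/(p^\ast-1)}$ and converge a.e., hence weakly in that space. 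For the $p$-Laplacian contributions I would invoke the standard Boccardo--Murat technique: test $\Phi'(w_n)$ against a truncated difference such as $\zeta_k(u_n - u)$ and exploit the strong monotonicity of $-\Delta_p$ to extract a.e.\ convergence of $\nabla u_n$ and $\nabla v_n$, which promotes $|\nabla u_n|^{p-2}\nabla u_n \rightharpoonup |\nabla u|^{p-2}\nabla u$ in $L^{p/(p-1)}$. This monotonicity/truncation step is the most technical ingredient, since it replaces the linear-functional passage to the limit available in the Laplacian case.

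The crux is excluding $w = 0$. If $w = 0$, then compactness yields $\int_\Omega (u_n^+)^a (v_n^+)^b\,dx \to 0$; under $(H_2)$ we have $\mu_1 = \mu_2 = 0$ and $\gamma = 1$, so the asymptotics $\Phi(w_n) \to c$ and $\Phi'(w_n)\,w_n \to 0$ reduce to $\|w_n\|^p \to A$ and $\int_\Omega (u_n^+)^\alpha (v_n^+)^\beta\,dx \to A$ with $A = Nc$. The definition of $S_{\alpha,\beta}$ in Proposition \ref{Proposition 1} gives $\int_\Omega (u_n^+)^\alpha (v_n^+)^\beta\,dx \le S_{\alpha,\beta}^{-p^\ast/p}\,\|w_n\|^{p^\ast}$, and passing to the limit forces $A \le S_{\alpha,\beta}^{-p^\ast/p}\, A^{p^\ast/p}$. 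Either $A = 0$, which contradicts $c \ne 0$ via $A = Nc$, or $A > 0$ and hence $A \ge S_{\alpha,\beta}^{N/p}$, contradicting $c < S_{\alpha,\beta}^{N/p}/N$. Therefore $w \ne 0$, and $w$ is the desired nontrivial critical point of $\Phi$.
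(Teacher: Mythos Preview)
Your proposal is correct and follows essentially the same route as the paper's proof: boundedness via the identity $\Phi(w_n)-\frac{1}{p^\ast}\Phi'(w_n)w_n$ combined with $\lambda<S_{a,b}(\Omega)$, a.e.\ gradient convergence via Boccardo--Murat to identify the weak limit as a critical point, and the same energy-threshold dichotomy (either $\|w_n\|\to 0$, contradicting $c\ne 0$, or $\|w_n\|^p\ge S_{\alpha,\beta}^{N/p}+o(1)$, contradicting the level bound) to exclude $w=0$. The only cosmetic differences are that the paper cites \cite[Theorem 2.1]{MR1183665} as a black box rather than sketching the truncation argument, and it works directly with inequalities rather than introducing your auxiliary quantity $A=Nc$; your extra observation that $(u_n^-,v_n^-)\to 0$ is correct but not needed here, since nonnegativity of critical points is handled in the paper just before the proposition.
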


\begin{proof}
Let $\{w_j\}$ be a $(PS)_c$ sequence. Then
$$
\aligned
\Phi(w_j) =& \frac{1}{p} \int_\Omega \left[|\nabla u_j|^p + |\nabla v_j|^p - \lambda\, (u_j^+)^a\, (v_j^+)^b\right] dx - \frac{1}{p^\ast} \int_\Omega (u_j^+)^\alpha\, (v_j^+)^\beta\, dx \\
=& c + o(1)
\endaligned
$$
and
\begin{equation} \label{04}
\aligned
\Phi'(w_j)\, w_j =& \int_\Omega \left[|\nabla u_j|^p + |\nabla v_j|^p - \lambda\, (u_j^+)^a\, (v_j^+)^b\right] dx - \int_\Omega (u_j^+)^\alpha\, (v_j^+)^\beta\, dx\\
 = &o(\|w_j\|),
\endaligned
\end{equation}
so
\begin{equation} \label{05}
\frac{1}{N} \int_\Omega \left[|\nabla u_j|^p + |\nabla v_j|^p - \lambda\, (u_j^+)^a\, (v_j^+)^b\right] dx = c + o(\|{w_j}\| + 1).
\end{equation}
Since the integral on the left is greater than or equal to $(1 - \frac{\lambda}{S_{a,b}(\Omega)}) \|{w_j}\|^p$, $\lambda < S_{a,b}(\Omega)$, and $p > 1$, it follows that $\{w_j\}$ is bounded in $W$. So a renamed subsequence converges to some $w$ weakly in $W$, strongly in $L^s(\Omega) \times L^t(\Omega)$ for all $1 \le s, t < p^\ast$, and a.e.\! in $\Omega$. Then $w_j \to w$ strongly in $W^{1,q}_0(\Omega) \times W^{1,r}_0(\Omega)$ for all $1 \le q, r < p$ by Boccardo and Murat \cite[Theorem 2.1]{MR1183665}, and hence $\nabla w_j \to \nabla w$ a.e.\! in $\Omega$ for a further subsequence. It then follows that $w$ is a critical point of $\Phi$.

Suppose $w = 0$. Since $\{w_j\}$ is bounded in $W$ and converges to zero in $L^p(\Omega) \times L^p(\Omega)$, (\ref{04}) and the H\"{o}lder inequality gives
\[
o(1) = \int_\Omega \left(|\nabla u_j|^p + |\nabla v_j|^p\right) dx - \int_\Omega (u_j^+)^\alpha\, (v_j^+)^\beta\, dx \ge \|{w_j}\|^p \left(1 - \frac{\|{w_j}\|^{p^\ast - p}}{S_{\alpha,\beta}^{\frac{p^\ast}{p}}}\right).
\]
If $\|{w_j}\| \to 0$, then $\Phi(w_j) \to 0$, contradicting $c \ne 0$, so this implies
\[
\|{w_j}\|^p \ge S_{\alpha,\beta}^{\frac{N}{p}} + o(1)
\]
for a renamed subsequence. Then (\ref{05}) gives
\[
c = \frac{\|{w_j}\|^p}{N} + o(1) \ge \frac{S_{\alpha,\beta}^{\frac{N}{p}}}{N} + o(1),
\]
contradicting $c < \frac{S_{\alpha,\beta}^{\frac{N}{p}}}{N}$.
\end{proof}

Recalling \eqref{8} and \eqref{9}, let $\eta : [0,\infty) \to [0,1]$ be a smooth cut-off function such that $\eta(s) = 1$ for $s \le \frac{1}{4}$ and $\eta(s) = 0$ for $s \ge \frac{1}{2}$, and set
\[
u_{\varepsilon,\rho}(x) = \eta\! \left(\frac{|x|}{\rho}\right) U_{\varepsilon,0}(x)
\]
for $\rho > 0$. We have the following estimates for $u_{\varepsilon,\rho}$ (see 
\cite[Lemma 3.1]{MR2514055}):
\begin{gather}
\label{07} \int_{\mathbb{R}^N} |\nabla u_{\varepsilon,\rho}|^p\, dx \le S^{\frac{N}{p}} + C \left(\frac{\varepsilon}{\rho}\right)^{\frac{N-p}{p-1}},\\
\int_{\mathbb{R}^N} u_{\varepsilon,\rho}^p\, dx \ge \begin{cases}
\dfrac{1}{C}\; \varepsilon^p\, \log\! \left(\dfrac{\rho}{\varepsilon}\right) - C\, \varepsilon^p, & N = p^2,\\
\dfrac{1}{C}\; \varepsilon^p - C\, \rho^p \left(\dfrac{\varepsilon}{\rho}\right)^{\frac{N-p}{p-1}}, & N > p^2,
\end{cases}\\
\label{09} \int_{\mathbb{R}^N} u_{\varepsilon,\rho}^{p^\ast}\, dx \ge S^{\frac{N}{p}} - C \left(\frac{\varepsilon}{\rho}\right)^{\frac{N}{p-1}},
\end{gather}
where $C = C(N,p)$. We will make use of these estimates in the proof of our last theorem.

\noindent{\bf Proof of Theorem \ref{th5}.}
In view of \eqref{011},
\[
\Phi(w) \ge \frac{1}{p} \left(1 - \frac{\lambda}{S_{a,b}(\Omega)}\right)\! \|w\|^p - \frac{1}{p^\ast\, S_{\alpha,\beta}^{\frac{p^\ast}{p}}} \|w\|^{p^\ast},
\]
so the origin is a strict local minimizer of $\Phi$. We may assume without loss of generality that $0 \in \Omega$. Fix $\rho > 0$ so small that $\Omega \supset B_\rho(0) \supset \text{supp} u_{\varepsilon,\rho}$, and let $w_\varepsilon = (\alpha^{\frac{1}{p}}\, u_{\varepsilon,\rho},\beta^{\frac{1}{p}}\, u_{\varepsilon,\rho}) \in W$. Noting that
$$
\aligned
\Phi(Rw_\varepsilon) &= \frac{R^p}{p} \left(p^\ast |{\nabla u_{\varepsilon,\rho}}|_p^p - \lambda \alpha^{\frac{a}{p}} \beta^{\frac{b}{p}} |{u_{\varepsilon,\rho}}|_p^p\right) - \frac{R^{p^\ast}}{p^\ast}\, \alpha^{\frac{\alpha}{p}} \beta^{\frac{\beta}{p}} |{u_{\varepsilon,\rho}}|_{p^\ast}^{p^\ast} \\
&\to - \infty
\endaligned
$$
as $R \to + \infty$, fix $R_0 > 0$ so large that $\Phi(R_0 w_\varepsilon) < 0$. Then let
\[
\Gamma = \{\gamma \in C([0,1],W) : \gamma(0) = 0,\, \gamma(1) = R_0 w_\varepsilon\}
\]
and set
\[
c := \inf_{\gamma \in \Gamma}\, \max_{t \in [0,1]}\, \Phi(\gamma(t)) > 0.
\]
By the mountain pass theorem, $\Phi$ has a $(PS)_c$ sequence $\{w_j\}$.

Since $t \mapsto t R_0 w_\varepsilon$ is a path in $\Gamma$,
\begin{equation} \label{010}
c \le \max_{t \in [0,1]}\, \Phi(t R_0 w_\varepsilon) = \frac{1}{N} \left(\frac{p^\ast |{\nabla u_{\varepsilon,\rho}}|_p^p - \lambda (\alpha^a \beta^b)^{\frac{1}{p}} |{u_{\varepsilon,\rho}}|_p^p}{(\alpha^\alpha \beta^\beta)^{\frac{1}{p^\ast}} |{u_{\varepsilon,\rho}}|_{p^\ast}^p}\right)^{\frac{N}{p}} =: \frac{1}{N}\, S_\varepsilon^{\frac{N}{p}}.
\end{equation}
By \eqref{07}--\eqref{09},
$$
\aligned
S_\varepsilon \le& \frac{p^\ast\, S^p + \dfrac{\lambda (\alpha^a \beta^b)^{\frac{1}{p}}}{C}\; \varepsilon^p\, \log \varepsilon + O(\varepsilon^p)}{(\alpha^\alpha \beta^\beta)^{\frac{1}{p^\ast}} \left(S^p + O(\varepsilon^{\frac{p^2}{p-1}})\right)^{\frac{p-1}{p}}}\\[10pt]
= &S_{\alpha,\beta} - \left(\frac{\lambda \alpha^{\frac{a}{p} - \frac{\alpha}{p^\ast}} \beta^{\frac{b}{p} - \frac{\beta}{p^\ast}}}{C S^{p-1}} |{\log \varepsilon}| + O(1)\right) \varepsilon^p
\endaligned
$$
if $N = p^2$ and
$$
\aligned
S_\varepsilon \le &\frac{p^\ast\, S^{\frac{N}{p}} - \dfrac{\lambda (\alpha^a \beta^b)^{\frac{1}{p}}}{C}\; \varepsilon^p + O(\varepsilon^{\frac{N-p}{p-1}})}{(\alpha^\alpha \beta^\beta)^{\frac{1}{p^\ast}} \left(S^{\frac{N}{p}} + O(\varepsilon^{\frac{N}{p-1}})\right)^{\frac{N-p}{N}}}\\[10pt]
= &S_{\alpha,\beta} - \left(\dfrac{\lambda \alpha^{\frac{a}{p} - \frac{\alpha}{p^\ast}} \beta^{\frac{b}{p} - \frac{\beta}{p^\ast}}}{C S^{\frac{N-p}{p}}} + O(\varepsilon^{\frac{N-p^2}{p-1}})\right) \varepsilon^p
\endaligned
$$
if $N > p^2$, so $S_\varepsilon < S_{\alpha,\beta}$ if $\varepsilon > 0$ is sufficiently small. So $c < \frac{S_{\alpha,\beta}^{\frac{N}{p}}}{N}$ by \eqref{010}, and hence a subsequence of $\{w_j\}$ converges weakly to a nontrivial critical point of $\Phi$ by Proposition \ref{Proposition 2}, which then is a nontrivial nonnegative solution of \eqref{6} with $(H_2)$ holding.
\hfill$\Box$

\footnotesize


\end{CJK*}
 \end{document}